\newtheorem{theorem}{Theorem}[section]
\newtheorem{corollary}[theorem]{Corollary}
\newtheorem{lemma}[theorem]{Lemma}
\newtheorem{proposition}[theorem]{Proposition}
\theoremstyle{definition}
\newtheorem{definition}[theorem]{Definition}
\newtheorem{conjecture}[theorem]{Conjecture}
\theoremstyle{remark}
\newtheorem{remark}[theorem]{Remark}
\numberwithin{equation}{section}
\begin{document}

\title{Ulrich Bundles on Veronese Surfaces}


\author{Emre Coskun}
\address{Middle East Technical University, 06800, Ankara, TURKEY}
\curraddr{}
\email{emcoskun@metu.edu.tr}
\thanks{The first author has been supported by TUBITAK project 114F116.}

\author{Ozhan Genc}
\address{Middle East Technical University, 06800, Ankara, TURKEY}
\curraddr{Mathematics Research and Teaching Group, Middle East Technical University, Northern Cyprus Campus, KKTC, Mersin 10, Turkey}
\email{ozhangenc@gmail.com}
\thanks{}

\subjclass[2010]{Primary 14J60}

\date{\today}

\begin{abstract}
We prove that every Ulrich bundle on the Veronese surface has a resolution in terms of twists of the trivial bundle over $\mathbb{P}^{2}$. Using this classification, we prove existence results for stable Ulrich bundles over $\mathbb{P}^{k}$ with respect to an arbitrary polarization $dH$.
\end{abstract}

\maketitle

\section{Introduction}
We recall the following characterization of Ulrich bundles (\cite{CKM2013}*{Proposition 2.3}).
\begin{definition}
Let $\mathcal{E}$ be a vector bundle of rank r on smooth projective variety $X \subset \mathbb{P}^N$ of dimension $k \geq 2$ and of degree $d$. Then $\mathcal{E}$ is Ulrich if and only if its Hilberts polynomial is $dr \binom{t+k}{k}$ and $H^{q}(X, \mathcal{E}(t))=0$ for $0 < q < l$ and $t \in \mathbb{Z}$ (this is known as the \textit{arithmetically Cohen-Macaulay (ACM) condition}).
\end{definition}
\begin{remark}
Note that the embedding $X \subset \mathbb{P}^N$ is part of the definition.
\end{remark}

The existence of Ulrich bundles on smooth projective varieties is related to a number of geometric questions. For instance, the existence of rank 1 or rank 2 Ulrich bundles on a hypersurface is related to the representation of that hypersurface as a determinant or Pfaffian (\cite{aB2000}). Another question of interest is the Minimal Resolution Conjecture (MRC) (\cite{aL1993}, \cite{FMP2003}). Also, in \cite{ES2011}, it is proved that the cone of cohomology tables of vector bundles on a $k$-dimensonal variety $X \subset \mathbb{P}^{N}$ is the same as the cone of cohomology tables of vector bundles on $\mathbb{P}^{k}$ if and only if there exists an Ulrich bundle on $X$.

It was conjectured in \cite{ESW2003} that there exist Ulrich bundles on any variety. Although it is known that all projective curves, hypersurfaces, Veronese varieties, abelian surfaces and many K3 surfaces admit Ulrich bundles, such a general existence result is not known.

One problem that has attracted a lot of attention recently is the existence of stable Ulrich bundles with given rank and Chern classes. Stable Ulrich bundles are particularly interesting as they are the building blocks of all Ulrich bundles. Every Ulrich bundle is semistable, and the Jordan-H\"{o}lder factors are stable Ulrich bundles by Lemma \ref{lem:ulrich_stability}.

Since the definition of Ulrich bundles depends on the polarization of the underlying variety, the following question arises.

\begin{center}
\textit{How do the different polarizations of the underlying variety affect the existence and classification of Ulrich bundles?}
\end{center}

In light of this question, in this work, we study the existence of stable Ulrich bundles of arbitrary rank on $\mathbb{P}^{2}$ with respect to the polarization $dH$ for any positive integer $d$.

\begin{definition}
For a positive integer $d$, the Veronese map of degree $d$ is the map $\mathbb{P}^{2} \to \mathbb{P}^{\binom{d+2}{2}-1}$ given by the very ample line bundle $\mathcal{O}_{\mathbb{P}^{2}}(dH)$. Note that its image is a surface of degree $d^{2}$. We call this surface the Veronese surface and denote it by $(\mathbb{P}^{2},dH)$.
\end{definition}

First, we investigate the Ulrich line bundles and we prove that Ulrich line bundles exist only for $d=1$; in that case there is only one Ulrich line bundle and it is isomorphic to $\mathcal{O}_{\mathbb{P}^{2}}$. We also prove that, for $d=1$, any rank $r$ Ulrich bundle is isomorphic to a direct sum of $r$ copies of $\mathcal{O}_{\mathbb{P}^{2}}$. From now on, we assume that $d \geq 2$.

The next step is to investigate stable Ulrich bundles of rank $r \geq 2$. First, we establish necessary conditions for the existence of such bundles. We prove that if $\mathcal{E}$ is a rank $r$ Ulrich bundle on $(\mathbb{P}^{2},dH)$ then its first Chern class $c_{1}(\mathcal{E})=\frac{3r(d-1)}{2}$. Using this, we can see directly that if $d$ is even, then $r$ must be even. Moreover, using Beilinson's theorem (\cite{OSS2011}*{Theorem 3.1.3, p.240}), we prove that $\mathcal{E}$ has a minimal free resolution
\[
 0 \to \mathcal{O}_{\mathbb{P}^{2}}^{\oplus\frac{r}{2}(d-1)}(d-2) \stackrel{f}{\rightarrow} \mathcal{O}_{\mathbb{P}^{2}}^{\oplus\frac{r}{2}(d+1)}(d-1) \to \mathcal{E} \to 0.
\]

We then construct Ulrich bundles using these necessary conditions. For $d=2$, we prove that there exists a unique rank 2 Ulrich bundle, which was also given in \cite{ESW2003}. Also, we prove that higher rank Ulrich bundles for $d=2$ are direct sums of this unique rank 2 Ulrich bundle.

Finally, for $d>2$, we analyze the cases where $d$ is even and odd separately. For even $d$, we prove the existence of stable Ulrich bundles for any (even) rank using the Casanellas-Hartshorne method \cite{CHGS2012} by consecutive extensions of rank 2 Ulrich bundles. For odd $d$, we have to assume that rank 3 Ulrich bundles exist. We make a conjecture for this assumption, since using \textit{Macaulay2} we demonstrate that rank 3 Ulrich bundles exist on $(\mathbb{P}^{2},dH)$ for odd degrees up to $d=43$. Then, assuming the existence of rank 3 Ulrich bundles and the Casanellas-Hartshorne method, we prove that there exist stable Ulrich bundles for any rank $r \geq 2$.

\subsection{Conventions}
We work over an algebraically closed base field of characteristic 0. All cohomologies are over $\mathbb{P}^2$.

\section{Preliminaries}
Recall that, for a positive integer $d$, the Veronese surface is denoted by $(\mathbb{P}^2, dH)$.

The following result might be termed \emph{Ulrich duality,} and is analogous to \cite{CKM2013}*{Proposition 2.11}. (Note that we cannot apply \cite{CKM2013}*{Proposition 2.11} directly in this situation, since the canonical class of $\mathbb{P}^2$ may not be a multiple of the polarization $dH$.)
\begin{proposition}\label{prop:Ulrich_duality}
Let $\mathcal{E}$ be an Ulrich bundle of rank $r$ on $(\mathbb{P}^2, dH)$. Then $\mathcal{E}^{\vee}(3d-3)$ is also Ulrich.
\end{proposition}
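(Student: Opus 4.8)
The plan is to verify the two defining conditions of an Ulrich bundle directly for $\mathcal{F} := \mathcal{E}^{\vee}(3d-3)$, converting every cohomological assertion about $\mathcal{F}$ into one about $\mathcal{E}$ by Serre duality on $\mathbb{P}^2$. First I would unwind the definition in terms of line bundles on $\mathbb{P}^2$. Since the Veronese embedding is given by $\mathcal{O}_{\mathbb{P}^2}(dH)$, the hyperplane twist $\mathcal{O}_X(1)$ equals $\mathcal{O}_{\mathbb{P}^2}(d)$, and the Veronese surface has degree $d^2$; thus the Hilbert polynomial condition becomes $\chi(\mathcal{E}(td)) = d^2 r\binom{t+2}{2}$ and the ACM condition (with $k=2$) becomes $H^1(\mathbb{P}^2,\mathcal{E}(td)) = 0$ for all $t \in \mathbb{Z}$. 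Since $\mathcal{F}$ is again a vector bundle of rank $r$, it suffices to establish these same two statements with $\mathcal{F}$ in place of $\mathcal{E}$.

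The heart of the argument is a twist computation using $\omega_{\mathbb{P}^2} = \mathcal{O}_{\mathbb{P}^2}(-3)$. Writing $\mathcal{F}(td) = \mathcal{E}^{\vee}\bigl((t+3)d-3\bigr)$, one checks that $\mathcal{F}(td)^{\vee}\otimes\omega_{\mathbb{P}^2} = \mathcal{E}(-(t+3)d)$, so Serre duality yields $H^q(\mathcal{F}(td)) \cong H^{2-q}(\mathcal{E}(-(t+3)d))^{\vee}$ for every $q$. The significance of the twist $3d-3$ is precisely that it makes the dualized sheaf a twist of $\mathcal{E}$ by an integer multiple of $d$, namely $\mathcal{E}(-(t+3)d)$, so that the Ulrich hypothesis on $\mathcal{E}$ applies verbatim. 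Taking $q=1$ gives $H^1(\mathcal{F}(td)) \cong H^1(\mathcal{E}(-(t+3)d))^{\vee} = 0$ for all $t$, which is the ACM condition for $\mathcal{F}$.

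For the Hilbert polynomial I would take Euler characteristics in the same Serre-duality identity; because $(-1)^{2-q} = (-1)^q$, this gives $\chi(\mathcal{F}(td)) = \chi(\mathcal{E}(-(t+3)d))$. Substituting the known Ulrich Hilbert polynomial of $\mathcal{E}$ and using the elementary identity $\binom{-t-1}{2} = \binom{t+2}{2}$, I obtain $\chi(\mathcal{F}(td)) = d^2 r\binom{-(t+3)+2}{2} = d^2 r\binom{t+2}{2}$, which is exactly the Hilbert polynomial of a rank $r$ Ulrich bundle. Combined with the vanishing above, this shows $\mathcal{F} = \mathcal{E}^{\vee}(3d-3)$ is Ulrich.

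I do not expect a genuine obstacle here: the whole content lies in getting the twist right. The reason \cite{CKM2013}*{Proposition 2.11} cannot be quoted directly is that the dualizing twist $\omega_{\mathbb{P}^2}\otimes\mathcal{O}_X(3) = \mathcal{O}_{\mathbb{P}^2}(-3)\otimes\mathcal{O}_{\mathbb{P}^2}(3d) = \mathcal{O}_{\mathbb{P}^2}(3d-3)$ fails to be a multiple of the polarization $dH$ unless $d$ divides $3$; however, the Serre duality computation is insensitive to this, so the only real care required is to keep the $\mathcal{O}_{\mathbb{P}^2}$-twists and the $\mathcal{O}_X$-twists from being confused.
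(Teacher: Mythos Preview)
Your proof is correct and follows essentially the same approach as the paper: both arguments verify the ACM condition and the Hilbert polynomial for $\mathcal{E}^{\vee}(3d-3)$ by applying Serre duality on $\mathbb{P}^2$ to reduce each statement to the corresponding Ulrich property of $\mathcal{E}$, using the key observation that the twist $3d-3$ makes the dual land on an integer multiple of $d$. Your write-up is slightly more explicit about the role of $\omega_{\mathbb{P}^2}=\mathcal{O}_{\mathbb{P}^2}(-3)$ and the binomial identity $\binom{-t-1}{2}=\binom{t+2}{2}$, but the content is the same.
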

\begin{proof}
By \cite{CKM2013}*{Proposition 2.3}, we must prove that $\mathcal{E}^{\vee}(3d-3)$ is ACM and that its Hilbert polynomial is $d^2 r \binom{t+2}{2}$.

The fact that $\mathcal{E}^{\vee}(3d-3)$ is ACM follows from Serre duality and the fact that $\mathcal{E}$ is ACM:
\[
 H^1(\mathcal{E}^{\vee}(3d-3)(dt)) = H^1(\mathcal{E}(-3d-dt)) = H^1(\mathcal{E}((-3-t)d)) = 0.
\]
Let us now compute the Hilbert polynomial of $\mathcal{E}^{\vee}(3d-3)$.
\begin{align*}
 \chi(\mathcal{E}^{\vee}(3d-3)(dt)) &= \sum_{i=0}^{2} (-1)^i h^i(\mathcal{E}^{\vee}(3d-3)(dt)) \\
 &= \sum_{i=0}^{2} (-1)^i h^{2-i}(\mathcal{E}(-3d-dt)) \\
 &= \chi(\mathcal{E}(d(-3-t)) \\
 &= d^2 r \binom{-3-t+2}{2} \\
 &= d^2 r \binom{t+2}{2}.
\end{align*}
\end{proof}

\begin{proposition}\label{prop:c_1}
Let $\mathcal{E}$ be an Ulrich bundle of rank $r$ on $(\mathbb{P}^2, dH)$. Then $c_1(\mathcal{E})=3r(d-1)/2$.
\end{proposition}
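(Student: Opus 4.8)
The plan is to compute the Hilbert polynomial of $\mathcal{E}$ with respect to the polarization $dH$ in two different ways and compare them. On the one hand, since $\mathcal{E}$ is Ulrich, \cite{CKM2013}*{Proposition 2.3} gives
\[
 \chi(\mathcal{E}(dt)) = d^2 r \binom{t+2}{2} = \frac{d^2 r}{2}\bigl(t^2 + 3t + 2\bigr).
\]
On the other hand, I would compute the same Euler characteristic directly by Hirzebruch--Riemann--Roch on $\mathbb{P}^2$. Writing $c_1 = c_1(\mathcal{E})$, $c_2 = c_2(\mathcal{E})$, and recalling $\mathrm{td}(\mathbb{P}^2) = 1 + \tfrac{3}{2}H + H^2$, the idea is to expand
\[
 \chi(\mathcal{E}(dt)) = \int_{\mathbb{P}^2} \mathrm{ch}(\mathcal{E}(dt))\,\mathrm{td}(\mathbb{P}^2)
\]
as a degree-two polynomial in $t$.

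The key bookkeeping step is to track how the twist by $\mathcal{O}(dt)$ affects the Chern classes: $c_1(\mathcal{E}(dt)) = c_1 + rdt$ and, by the standard tensor formula, $c_2(\mathcal{E}(dt)) = c_2 + (r-1)c_1\,dt + \binom{r}{2}(dt)^2$. Substituting these into Riemann--Roch produces a polynomial in $t$ whose coefficients involve $r$, $c_1$, and $c_2$, and I would then match it against $\tfrac{d^2 r}{2}(t^2 + 3t + 2)$ coefficient by coefficient.

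The decisive observation is that the coefficient of $t^2$ matches automatically (both equal $\tfrac{d^2 r}{2}$, the $c_2$ contribution cancelling against the $\mathrm{ch}_2$ term), while the coefficient of $t$ turns out to be free of $c_2$ and equal to $d\bigl(\tfrac{3r}{2} + c_1\bigr)$. Equating this to $\tfrac{3 d^2 r}{2}$ gives $\tfrac{3r}{2} + c_1 = \tfrac{3dr}{2}$, hence $c_1 = \tfrac{3r(d-1)}{2}$, as claimed. I do not expect a serious obstacle; the only delicate point is the correct expansion of $c_2$ under the twist, and the whole payoff is that $c_2$ never enters the linear coefficient, so the argument never requires computing it. As a consistency check, Proposition \ref{prop:Ulrich_duality} forces $c_1(\mathcal{E}) + c_1(\mathcal{E}^{\vee}(3d-3)) = r(3d-3)$, which is compatible with both bundles carrying the asserted first Chern class.
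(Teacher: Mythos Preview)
Your proposal is correct and follows essentially the same route as the paper: both compute $\chi(\mathcal{E}(dt))$ via Riemann--Roch using the standard twist formulas for $c_1$ and $c_2$, then equate the coefficient of $t$ with that of $d^2 r\binom{t+2}{2}$ to solve for $c_1$. Your additional consistency check via Ulrich duality is a nice sanity check but is not part of the paper's argument.
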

\begin{proof}
If $\mathcal{E}$ is an Ulrich bundle of rank $r$ on $(\mathbb{P}^2, dH)$, its Hilbert polynomial is $d^2 r \binom{t+2}{2}$ by \cite{CKM2013}*{Proposition 2.3}. The proof involves a simple application of the Riemann-Roch theorem for vector bundles on surfaces. By \cite{wF1984}*{Ex 3.2.2}, we have
\begin{align*}
 c_1(\mathcal{E}(dt)) &= c_1(\mathcal{E}) + rdt \\
 c_2(\mathcal{E}(dt)) &= c_2(\mathcal{E}) + (r-1) dt c_1(\mathcal{E}) + \binom{r}{2} (dt)^2.
\end{align*}
Now, by the Riemann-Roch theorem, we have
\begin{align*}
 \chi(\mathcal{E}(dt)) &= \frac{c_1(\mathcal{E}(dt)) (c_1(\mathcal{E}(dt)) - K_{\mathbb{P}^2})}{2} - c_2(\mathcal{E}(dt)) + r \chi(\mathcal{O}_{\mathbb{P}^2}) \\
 &= \frac{(c_1(\mathcal{E})+rdt)(c_1(\mathcal{E})+rdt+3)}{2} - c_2(\mathcal{E}) -  (r-1) dt c_1(\mathcal{E}) - \binom{r}{2}(dt)^2 + r.
\end{align*}
Comparing the coefficients of $t$ in the above expression and $d^2 r \binom{t+2}{2}$, we obtain
\[
 \frac{3d^2r}{2} = \frac{rd(2c_1(\mathcal{E})+3)}{2}-(r-1)dc_1(\mathcal{E}).
\]
Solving this equation for $c_1(\mathcal{E})$, we obtain
\[
 c_1(\mathcal{E})=\frac{3r(d-1)}{2}.
\]
\end{proof}
\begin{corollary}\label{cor:rank_and_degree}
If $d$ is even, the rank $r$ of an Ulrich bundle on $(\mathbb{P}^2, dH)$ must also be even.
\end{corollary}

\begin{lemma}\label{lemma:h2_vanishing}
For Ulrich bundles $\mathcal{E}$ and $\mathcal{F}$ on $(\mathbb{P}^2, dH)$, we have $h^2(\mathcal{E} \otimes \mathcal{F}^{\vee})=0$.
\end{lemma}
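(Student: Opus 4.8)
The plan is to convert the cohomological vanishing into the nonexistence of a homomorphism and then rule out that homomorphism by a slope argument, using that every Ulrich bundle is semistable together with the slope computation supplied by Proposition \ref{prop:c_1}.

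First I would apply Serre duality on $\mathbb{P}^2$, recalling $K_{\mathbb{P}^2} = \mathcal{O}_{\mathbb{P}^2}(-3)$:
\[
 h^2(\mathcal{E} \otimes \mathcal{F}^{\vee}) = h^0\bigl((\mathcal{E} \otimes \mathcal{F}^{\vee})^{\vee} \otimes K_{\mathbb{P}^2}\bigr) = h^0(\mathcal{E}^{\vee} \otimes \mathcal{F}(-3)) = \hom(\mathcal{E}, \mathcal{F}(-3)).
\]
Thus it suffices to show that there is no nonzero morphism $\mathcal{E} \to \mathcal{F}(-3)$. Next I would compute slopes with respect to the polarization $dH$. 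By Proposition \ref{prop:c_1} every Ulrich bundle satisfies $c_1 = \tfrac{3r(d-1)}{2}H$, so its slope $\mu = \tfrac{c_1 \cdot dH}{r} = \tfrac{3d(d-1)}{2}$ is independent of the rank; in particular $\mu(\mathcal{E}) = \mu(\mathcal{F}) = \tfrac{3d(d-1)}{2}$. Twisting down by $\mathcal{O}_{\mathbb{P}^2}(-3)$ lowers the slope by $(-3H)\cdot(dH) = 3d$, so $\mu(\mathcal{F}(-3)) = \tfrac{3d(d-3)}{2}$, and since $d \geq 2$ we get the strict inequality $\mu(\mathcal{E}) - \mu(\mathcal{F}(-3)) = 3d > 0$.

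I would then invoke semistability: both $\mathcal{E}$ and $\mathcal{F}$, hence $\mathcal{F}(-3)$, are Ulrich and therefore semistable. A nonzero morphism $\phi : \mathcal{E} \to \mathcal{F}(-3)$ would have image $\mathcal{I}$ satisfying $\mu(\mathcal{E}) \leq \mu(\mathcal{I})$, since $\mathcal{I}$ is a quotient of the semistable $\mathcal{E}$, and $\mu(\mathcal{I}) \leq \mu(\mathcal{F}(-3))$, since $\mathcal{I}$ is a subsheaf of the semistable $\mathcal{F}(-3)$. This forces $\mu(\mathcal{E}) \leq \mu(\mathcal{F}(-3))$, contradicting the strict inequality above. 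Hence $\hom(\mathcal{E}, \mathcal{F}(-3)) = 0$ and the claim follows.

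The only genuinely delicate point is that this route relies on the semistability of Ulrich bundles and on confirming the strict slope gap, so I would want to make sure semistability is available here without circular dependence on later stability results. As a hedge I would keep a second route in reserve that avoids semistability: tensoring the free resolution $0 \to \mathcal{O}_{\mathbb{P}^2}^{\oplus\frac{r}{2}(d-1)}(d-2) \to \mathcal{O}_{\mathbb{P}^2}^{\oplus\frac{r}{2}(d+1)}(d-1) \to \mathcal{E} \to 0$ by $\mathcal{F}^{\vee}$ and using the long exact sequence (with $H^3 = 0$ on a surface) reduces the problem to $H^2(\mathcal{F}^{\vee}(d-1)) = 0$; by Serre duality this equals $h^0(\mathcal{F}(-d-2))$, which vanishes because the Ulrich vanishing $H^0(\mathcal{F}(-d)) = 0$ forces all further negative twists to be sectionless.
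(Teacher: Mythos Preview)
Your primary argument is correct and is essentially the paper's own proof: Serre duality followed by the vanishing of $\mathrm{Hom}$ between semistable sheaves with the wrong inequality, the only cosmetic difference being that you phrase it via slopes while the paper compares reduced Hilbert polynomials (Gieseker semistability) and writes the same space as $\mathrm{hom}(\mathcal{E}(3),\mathcal{F})$. Your circularity worry is unfounded—the paper imports semistability of Ulrich bundles from an external reference at this point—and note the small slip that $\mathcal{F}(-3)$ is not itself Ulrich, merely a twist of a semistable bundle and hence semistable.
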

\begin{proof}
\begin{align*}
 h^2(\mathcal{E} \otimes \mathcal{F}^{\vee}) &= h^0(\mathcal{E}^{\vee} \otimes \mathcal{F} \otimes \mathcal{O}_{\mathbb{P}^2}(-3)) \\
 &= hom(\mathcal{E}(3),\mathcal{F}) = 0.
\end{align*}
The last equality follows because as an Ulrich bundle, $\mathcal{E}$ is semistable by \cite{CKM2013}*{Proposition 2.6}; and the reduced Hilbert polynomial of $\mathcal{E}(3)$ is strictly greater than the reduced Hilbert polynomial of $\mathcal{E}$. The desired vanishing now follows from \cite{HL2010}*{Proposition 1.2.7}.
\end{proof}
We have the following corollary, which will be useful later in dimension calculations.
\begin{corollary}\label{cor:h1_simple}
For a simple Ulrich bundle $\mathcal{E}$ of rank $r$ on $(\mathbb{P}^2, dH)$, we have $h^1(\mathcal{E} \otimes \mathcal{E}^{\vee})=\frac{1}{4}(4+r^2(d^2-5)))$.
\end{corollary}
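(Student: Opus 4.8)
The plan is to determine the single unknown cohomology group $h^{1}(\mathcal{E}\otimes\mathcal{E}^{\vee})$ by computing the Euler characteristic $\chi(\mathcal{E}\otimes\mathcal{E}^{\vee})$ and pinning down the other two cohomology groups from inputs already available. Since $\mathcal{E}$ is simple, $h^{0}(\mathcal{E}\otimes\mathcal{E}^{\vee})=\hom(\mathcal{E},\mathcal{E})=1$. Applying Lemma~\ref{lemma:h2_vanishing} with $\mathcal{F}=\mathcal{E}$ gives $h^{2}(\mathcal{E}\otimes\mathcal{E}^{\vee})=0$. Consequently
\[
 h^{1}(\mathcal{E}\otimes\mathcal{E}^{\vee}) = h^{0}(\mathcal{E}\otimes\mathcal{E}^{\vee}) + h^{2}(\mathcal{E}\otimes\mathcal{E}^{\vee}) - \chi(\mathcal{E}\otimes\mathcal{E}^{\vee}) = 1 - \chi(\mathcal{E}\otimes\mathcal{E}^{\vee}),
\]
so everything reduces to a Riemann--Roch computation for the endomorphism bundle $\mathcal{E}\otimes\mathcal{E}^{\vee}$ of rank $r^{2}$.

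Next I would compute the Chern data of $\mathcal{E}\otimes\mathcal{E}^{\vee}$. Multiplying $\operatorname{ch}(\mathcal{E})$ by $\operatorname{ch}(\mathcal{E}^{\vee})$ (or appealing to the standard endomorphism-bundle formulas) gives $c_{1}(\mathcal{E}\otimes\mathcal{E}^{\vee})=0$ and $c_{2}(\mathcal{E}\otimes\mathcal{E}^{\vee})=2r\,c_{2}(\mathcal{E})-(r-1)\,c_{1}(\mathcal{E})^{2}$. Riemann--Roch on $\mathbb{P}^{2}$ then yields $\chi(\mathcal{E}\otimes\mathcal{E}^{\vee})=r^{2}-c_{2}(\mathcal{E}\otimes\mathcal{E}^{\vee})$. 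The only remaining input is $c_{2}(\mathcal{E})$, which I would extract from the Ulrich Hilbert polynomial: comparing the constant terms of the Riemann--Roch expansion of $\chi(\mathcal{E}(dt))$ (already written out in the proof of Proposition~\ref{prop:c_1}) with $d^{2}r\binom{t+2}{2}$ determines $c_{2}(\mathcal{E})$ in terms of $c_{1}(\mathcal{E})$ and $r$. Substituting $c_{1}(\mathcal{E})=\tfrac{3r(d-1)}{2}$ from Proposition~\ref{prop:c_1} then expresses $c_{2}(\mathcal{E})$, and hence $\chi(\mathcal{E}\otimes\mathcal{E}^{\vee})$, purely in terms of $r$ and $d$.

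Finally I would carry out the algebra. After substitution the cubic-in-$r$ contributions coming from $2r\,c_{2}(\mathcal{E})$ and $(r-1)c_{1}(\mathcal{E})^{2}$ cancel, and the surviving terms collapse to $\chi(\mathcal{E}\otimes\mathcal{E}^{\vee})=\tfrac{1}{4}r^{2}(5-d^{2})$; combined with $h^{1}=1-\chi$ this gives the stated value $\tfrac{1}{4}(4+r^{2}(d^{2}-5))$. There is no genuine obstacle here: the conceptual content lies entirely in the first step, where simplicity forces $h^{0}=1$ and Lemma~\ref{lemma:h2_vanishing} forces $h^{2}=0$, leaving $h^{1}$ as the unique degree of freedom fixed by the Euler characteristic. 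The one place to take care is the bookkeeping of Chern classes and the cancellation of the cubic terms in $r$, but this is routine rather than an essential difficulty.
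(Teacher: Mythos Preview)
Your proposal is correct and follows essentially the same approach as the paper: use simplicity for $h^{0}=1$, Lemma~\ref{lemma:h2_vanishing} for $h^{2}=0$, and then solve for $h^{1}$ via the Euler characteristic. The only difference is cosmetic: the paper cites \cite{CHGS2012}*{Proposition 2.12} (together with Proposition~\ref{prop:c_1}) for the value of $\chi(\mathcal{E}\otimes\mathcal{E}^{\vee})$, whereas you carry out the underlying Riemann--Roch computation explicitly.
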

\begin{proof}
This is a simple calculation involving the Chern classes. The result follows from the fact that $h^0(\mathcal{E} \otimes \mathcal{E}^{\vee})=1$, Proposition \ref{prop:c_1}, Lemma \ref{lemma:h2_vanishing} and \cite{CHGS2012}*{Proposition 2.12}.
\end{proof}

The following result is crucial for the application of the method of Casanellas and Hartshorne.
\begin{lemma}\label{lem:ulrich_stability}
An Ulrich bundle is Gieseker-semistable. For Ulrich bundles Gieseker-stability and $\mu$-stability are equivalent. The Jordan-H\"{o}lder factors of an Ulrich bundle are stable Ulrich bundles. Therefore, any Ulrich bundle is obtained by consecutive extensions of stable Ulrich bundles.
\end{lemma}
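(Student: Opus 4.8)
The plan is to reduce every assertion to one key claim and then prove that claim by a cohomological computation. Gieseker-semistability is \cite{CKM2013}*{Proposition 2.6}, and for any torsion-free sheaf on a polarized smooth surface one has the standard implications $\mu$-stable $\Rightarrow$ Gieseker-stable $\Rightarrow$ Gieseker-semistable $\Rightarrow$ $\mu$-semistable (\cite{HL2010}). Write $\mu(-)$ for the slope with respect to $dH$; by Proposition \ref{prop:c_1} every Ulrich bundle has the same slope $\mu = \tfrac{3d(d-1)}{2}$ and the same reduced Hilbert polynomial $\binom{t+2}{2}$. The key claim is: \emph{if $\mathcal{M}$ is an Ulrich bundle and $0 \to \mathcal{F} \to \mathcal{M} \to \mathcal{G} \to 0$ is exact with $\mathcal{F}$ saturated and $\mathcal{F}, \mathcal{G}$ both $\mu$-semistable of slope $\mu$, then $\mathcal{F}$ and $\mathcal{G}$ are Ulrich bundles.} Granting it, the Jordan--H\"older factors are handled by noting that the subobjects $E_i$ of a Gieseker filtration are saturated and, as iterated extensions of stable factors of slope $\mu$, are $\mu$-semistable of slope $\mu$; applying the claim to $E_i \subset \mathcal{M}$ and then to $E_{i-1} \subset E_i$ shows each $G_i = E_i / E_{i-1}$ is Ulrich. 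Likewise the two stability notions agree: were a Gieseker-stable $\mathcal{E}$ not $\mu$-stable, its $\mu$-Jordan--H\"older filtration would provide a proper saturated $\mu$-semistable subsheaf of slope $\mu$ with $\mu$-semistable quotient, which the claim would force to be Ulrich and hence to have reduced Hilbert polynomial $\binom{t+2}{2} = p_{\mathcal{E}}$, contradicting strict Gieseker-stability.

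To prove the claim I would use the cohomological description of Ulrich sheaves, $H^{\bullet}(\mathcal{M}(-dH)) = H^{\bullet}(\mathcal{M}(-2dH)) = 0$ (\cite{CKM2013}*{Proposition 2.3}). Twisting the sequence by $-jdH$ ($j = 1, 2$) and feeding in the vanishing for $\mathcal{M}$, the long exact sequence gives for free $h^0(\mathcal{F}(-jdH)) = 0$ (a subsheaf of $H^0(\mathcal{M}(-jdH))$), $h^2(\mathcal{G}(-jdH)) = 0$ (a quotient of $H^2(\mathcal{M}(-jdH))$), together with the isomorphisms $H^0(\mathcal{G}(-jdH)) \cong H^1(\mathcal{F}(-jdH))$ and $H^1(\mathcal{G}(-jdH)) \cong H^2(\mathcal{F}(-jdH))$. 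Two more groups fall to slope estimates: Serre duality identifies $H^2(\mathcal{F}(-dH))^{\vee}$ with $\mathrm{Hom}(\mathcal{F}, \mathcal{O}_{\mathbb{P}^2}((d-3)H))$, which is zero since a nonzero map would realize a rank-one quotient of the $\mu$-semistable $\mathcal{F}$ (slope $\geq \mu$) inside a line bundle of strictly smaller slope $(d-3)d$; and $H^0(\mathcal{G}(-2dH)) = \mathrm{Hom}(\mathcal{O}_{\mathbb{P}^2}(2dH), \mathcal{G}) = 0$ because $\mathcal{O}_{\mathbb{P}^2}(2dH)$ has slope $2d^2 > \mu = \mu(\mathcal{G})$. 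Through the isomorphisms this yields $h^1(\mathcal{G}(-dH)) = 0$ and $h^1(\mathcal{F}(-2dH)) = 0$ as well.

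The main obstacle is the surviving pair $H^1(\mathcal{F}(-dH))$ and $H^2(\mathcal{F}(-2dH))$, which for $d \geq 3$ cannot be killed by any slope inequality — a subsheaf of strictly smaller slope is permitted by semistability — and which encode the genuine content of the Ulrich condition. I would resolve this by an Euler-characteristic squeeze. Since $\mathcal{F}$ has slope $\mu$, its reduced Hilbert polynomial is $\tfrac{t^2}{2} + \tfrac{3}{2}t + c_{\mathcal{F}}$, and because $\binom{t+2}{2}$ vanishes at both $t = -1$ and $t = -2$ the two Euler characteristics coincide: $\chi(\mathcal{F}(-dH)) = \chi(\mathcal{F}(-2dH)) = (\mathrm{rk}\,\mathcal{F})\, d^2 (c_{\mathcal{F}} - 1)$. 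On the other hand the vanishings already secured give $\chi(\mathcal{F}(-dH)) = -h^1(\mathcal{F}(-dH)) \leq 0$ and $\chi(\mathcal{F}(-2dH)) = h^2(\mathcal{F}(-2dH)) \geq 0$. Equating the two forces both to vanish, so $c_{\mathcal{F}} = 1$ and the last two groups are zero; hence $\mathcal{F}$ has Ulrich cohomology, and the isomorphisms propagate this to $\mathcal{G}$. It remains only to check local freeness: an Ulrich sheaf on the smooth surface $\mathbb{P}^2$ is ACM, so in $0 \to \mathcal{F} \to \mathcal{F}^{\vee\vee} \to T \to 0$ the zero-dimensional $T$ must vanish (otherwise $H^0(T) \hookrightarrow H^1(\mathcal{F}(mdH))$ is nonzero for $m \ll 0$). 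Thus $\mathcal{F}$ and $\mathcal{G}$ are Ulrich bundles, which proves the claim and with it the lemma.
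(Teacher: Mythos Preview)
Your argument is correct, but it differs substantially from the paper's treatment: the paper does not prove this lemma at all, instead deferring entirely to \cite{CKM2012}*{Proposition 2.11, Corollary 2.16} and \cite{CHGS2012}*{Theorem 2.9(c)}, which establish the statements for Ulrich bundles on arbitrary projective varieties. What you have written is an independent, essentially self-contained proof tailored to the surface case $(\mathbb{P}^2,dH)$. Your key claim is in fact slightly sharper than the version in \cite{CHGS2012}: you assume only that the sub- and quotient sheaves are $\mu$-semistable of the correct slope, rather than that they share the reduced Hilbert polynomial, and you recover the constant term via the Euler-characteristic squeeze $\chi(\mathcal{F}(-dH))=\chi(\mathcal{F}(-2dH))$. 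This is exactly what makes your deduction of ``Gieseker-stable $\Rightarrow$ $\mu$-stable'' work without circularity, and it is an elegant use of the fact that the quadratic-plus-linear part $\tfrac{d^2}{2}(t^2+3t)$ of the Hilbert polynomial takes the same value at $t=-1$ and $t=-2$.

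Two minor remarks. First, local freeness of $\mathcal{F}$ is actually automatic: a saturated subsheaf of a locally free sheaf on a smooth surface is reflexive (since $\mathcal{F}^{\vee\vee}/\mathcal{F}$ is torsion and embeds in the torsion-free $\mathcal{M}/\mathcal{F}$), hence locally free. Your ACM argument is still needed for $\mathcal{G}$, and there you are implicitly invoking the standard fact that the vanishing $H^{\bullet}(\mathcal{G}(-jdH))=0$ for $j=1,2$ forces $H^1(\mathcal{G}(tdH))=0$ for all $t$ (e.g.\ via a finite linear projection to $\mathbb{P}^2$, under which an Ulrich sheaf pushes forward to a trivial bundle); it would be worth making that step explicit. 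Second, the trade-off between the two routes is clear: the cited references give the result in all dimensions with no extra work, whereas your argument is more elementary and transparent but exploits the two-twist coincidence that is special to surfaces.
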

\begin{proof}
See \cite{CKM2012}*{Proposition 2.11, Corollary 2.16} and \cite{CHGS2012}*{Theorem 2.9 (c)}.
\end{proof}

We shall use the following result later.
\begin{lemma}\label{lemma:various_vanishings}
For an Ulrich bundle $\mathcal{E}$ of rank $r$ on $(\mathbb{P}^2, dH)$, we have:
\begin{itemize}
 \item $h^0(\mathcal{E}(-d))=h^1(\mathcal{E}(-d))=h^2(\mathcal{E}(-d))=0$,
 \item $h^0(\mathcal{E}(-d+1))=\frac{r}{2}(d+1)$, $h^1(\mathcal{E}(-d+1))=h^2(\mathcal{E}(-d+1))=0$,
 \item $h^0(\mathcal{E}(-d+2))=r(d+2)$ and $h^1(\mathcal{E}(-d+2))=0$.
\end{itemize}
\end{lemma}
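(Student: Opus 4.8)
The plan is to reduce all three lines of the statement to three separate computations: the Euler characteristic $\chi(\mathcal{E}(m))$, the vanishing of $H^2$, and the vanishing of $H^1$; the asserted values of $h^0$ then fall out of Riemann--Roch. Throughout I write $\mathcal{E}(m) = \mathcal{E} \otimes \mathcal{O}_{\mathbb{P}^2}(m)$, and I may assume $r \geq 2$, since for $d \geq 2$ there are no Ulrich line bundles and the case $d = 1$ (where $\mathcal{E} \cong \mathcal{O}_{\mathbb{P}^2}^{\oplus r}$) is checked directly. First I would compute $\chi(\mathcal{E}(m))$. Since $\chi(\mathcal{E}(dt)) = d^2 r \binom{t+2}{2} = \frac{d^2 r}{2}(t+1)(t+2)$ for all $t \in \mathbb{Z}$, substituting $t = m/d$ shows that the two degree-two polynomials $\chi(\mathcal{E}(m))$ and $\frac{r}{2}(m+d)(m+2d)$ agree on the infinite set $d\mathbb{Z}$, hence coincide (alternatively, apply Riemann--Roch together with Proposition \ref{prop:c_1}). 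This yields
\[
 \chi(\mathcal{E}(-d)) = 0, \qquad \chi(\mathcal{E}(-d+1)) = \tfrac{r}{2}(d+1), \qquad \chi(\mathcal{E}(-d+2)) = r(d+2).
\]

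Next I would dispose of the relevant $H^2$ groups using semistability. By Lemma \ref{lem:ulrich_stability}, $\mathcal{E}$ is $\mu$-semistable of slope $\mu(\mathcal{E}) = c_1(\mathcal{E})/r = \frac{3(d-1)}{2}$. A nonzero homomorphism $\mathcal{E} \to \mathcal{O}_{\mathbb{P}^2}(n)$ has image a rank-one torsion-free quotient of $\mathcal{E}$ contained in $\mathcal{O}_{\mathbb{P}^2}(n)$, hence of slope $\leq n$; since $r \geq 2$, semistability forces $n \geq \frac{3(d-1)}{2}$, so $\hom(\mathcal{E}, \mathcal{O}_{\mathbb{P}^2}(n)) = 0$ whenever $n \leq d-2$. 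By Serre duality $h^2(\mathcal{E}(m)) = \hom(\mathcal{E}, \mathcal{O}_{\mathbb{P}^2}(-m-3))$, which therefore vanishes for all $m \geq -d-1$. In particular $H^2(\mathcal{E}(-d-1)) = H^2(\mathcal{E}(-d)) = H^2(\mathcal{E}(-d+1)) = H^2(\mathcal{E}(-d+2)) = 0$.

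The crux is the vanishing of $H^1$ for the twists $\mathcal{E}(-d+1)$ and $\mathcal{E}(-d+2)$, since these are not of the form $\mathcal{E}(dt)$ and the ACM condition says nothing about them directly. I expect this to be the main obstacle: the naive attempt via restriction to a general line together with the Grauert--M\"{u}lich theorem only bounds consecutive gaps in the splitting type, which is too weak to force the cohomology to vanish once $r$ is large. Instead I would invoke Castelnuovo--Mumford regularity. Set $\mathcal{F} = \mathcal{E}(-d+1)$. The ACM property gives $H^1(\mathcal{F}(-1)) = H^1(\mathcal{E}(-d)) = 0$, and the previous step gives $H^2(\mathcal{F}(-2)) = H^2(\mathcal{E}(-d-1)) = 0$; hence $\mathcal{F}$ is $0$-regular. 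Mumford's theorem then yields $H^1(\mathcal{F}(j)) = 0$ for all $j \geq -1$, so in particular $H^1(\mathcal{E}(-d+1)) = H^1(\mathcal{F}) = 0$ and $H^1(\mathcal{E}(-d+2)) = H^1(\mathcal{F}(1)) = 0$.

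Finally I would assemble the $h^0$'s from the identity $\chi = h^0 - h^1 + h^2$. The ACM condition gives $h^1(\mathcal{E}(-d)) = 0$; combined with $h^2(\mathcal{E}(-d)) = 0$ and $\chi(\mathcal{E}(-d)) = 0$, this forces $h^0(\mathcal{E}(-d)) = 0$, proving the first line. For the other two, the vanishings $h^1 = h^2 = 0$ established above together with the Euler characteristics computed in the first step give $h^0(\mathcal{E}(-d+1)) = \chi(\mathcal{E}(-d+1)) = \frac{r}{2}(d+1)$ and $h^0(\mathcal{E}(-d+2)) = \chi(\mathcal{E}(-d+2)) = r(d+2)$, completing the proof.
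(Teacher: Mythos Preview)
Your proof is correct and takes a genuinely different route from the paper's. The paper proves the $H^2$ vanishings by invoking Ulrich duality (Proposition~\ref{prop:Ulrich_duality}), then restricts $\mathcal{E}$ to a line $L$, uses the resulting splitting $\mathcal{E}|_L \cong \bigoplus \mathcal{O}_{\mathbb{P}^1}(a_i)$, and extracts the bound $a_i \geq d-1$ from the long exact sequence; the remaining $h^0$ and $h^1$ values are then read off from successive line-restriction sequences. Your argument bypasses both Ulrich duality and the line restriction entirely: you get all the $H^2$ vanishings in one stroke from $\mu$-semistability and Serre duality, and then the key $H^1$ vanishings at twists $-d+1$ and $-d+2$ fall out of Castelnuovo--Mumford regularity once you observe that $\mathcal{E}(-d+1)$ is $0$-regular (using only the ACM vanishing $H^1(\mathcal{E}(-d))=0$ and the $H^2(\mathcal{E}(-d-1))=0$ you already have). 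The $h^0$ values then come for free from the Euler characteristic, which you identify as the polynomial $\frac{r}{2}(m+d)(m+2d)$.

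What each approach buys: your argument is shorter, avoids Proposition~\ref{prop:Ulrich_duality}, and makes no case-by-case analysis of cohomology on $\mathbb{P}^1$. The paper's approach, by contrast, yields the auxiliary information $a_i \geq d-1$ on the generic splitting type, which is later reused in the proof of Theorem~\ref{thm:ulrich_from_map}; your method does not produce this as a by-product. One minor remark: the clause ``since $r \geq 2$'' in your semistability step is unnecessary---the slope comparison $n < \tfrac{3(d-1)}{2}$ already suffices for any rank---but it does no harm.
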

\begin{proof}
For the first item, note that $\mathcal{E}(-d)$ is in fact the $(-1)$-twist of the Ulrich bundle $\mathcal{E}$ according to the polarization $dH$, hence $h^0(\mathcal{E}(-d))=0$. Since $\mathcal{E}$ is ACM, $h^1(\mathcal{E}(-d))=0$. The Euler characteristic of $\mathcal{E}(-d)$ is 0 by \cite{CKM2013}*{Proposition 2.3}. It follows that $h^2(\mathcal{E}(-d))=0$.

For the rest of the proof, let $L \subset \mathbb{P}^2$ be a line.

Consider the restriction $\mathcal{E}|_{L}$. By a well-known result of Grothendieck, we have
\[
 \mathcal{E}|_{L} = \bigoplus_{i=1}^{r} \mathcal{O}_{\mathbb{P}^1}(a_i)
\]
for some integers $a_i$. Using Proposition \ref{prop:c_1}, we have
\[
 deg(\mathcal{E}|_{L}) = \sum_{i=1}^{r} a_i = \frac{3r}{2}(d-1).
\]
Consider the short exact sequence
\[
 0 \to \mathcal{O}_{\mathbb{P}^2}(-1) \to \mathcal{O}_{\mathbb{P}^2} \to \mathcal{O}_{L} \to 0.
\]
Tensoring with $\mathcal{E}(-d)$, we have
\[
 0 \to \mathcal{E}(-1-d) \to \mathcal{E}(-d) \to \mathcal{E}|_{L}(-d) \to 0.
\]
The associated long exact sequence is
\begin{align*}
 0 &\to H^0(\mathcal{E}(-1-d)) &\to H^0(\mathcal{E}(-d)) &\to H^0(\mathcal{E}|_{L}(-d)) \\
 &\to H^1(\mathcal{E}(-1-d)) &\to H^1(\mathcal{E}(-d)) &\to H^1(\mathcal{E}|_{L}(-d)) \\
 &\to H^2(\mathcal{E}(-1-d)) &\to H^2(\mathcal{E}(-d)) &\to 0.
\end{align*}
Note that the middle term in each line is 0 by the previous item.

Now,
\begin{align*}
 h^2(\mathcal{E}(1-d)) &= h^0(\mathcal{E}^{\vee}(d-4)) \\
 &= h^0(\mathcal{E}^{\vee}(3d-3)(-1-2d)) \\
 &= 0,
\end{align*}
where we have used the facts that, by Proposition \ref{prop:Ulrich_duality}, $\mathcal{E}^{\vee}(3d-3)$ is an Ulrich bundle, and that any twist of an Ulrich bundle by an integer smaller than or equal to $-d$ has no global sections. Hence, the long exact sequence above gives us $h^1(\mathcal{E}|_{L}(-d))=h^1(\bigoplus_{i=1}^{r} \mathcal{O}_{\mathbb{P}^1}(a_i-d))=0$. It follows that $a_i > d-2$ for all $i$.

We now prove the second item.

Consider the short exact sequence
\[
 0 \to \mathcal{E}(-d) \to \mathcal{E}(-d+1) \to \mathcal{E}|_{L}(-d+1) \to 0.
\]
The associated long exact sequence is
\begin{align*}
 0 &\to H^0(\mathcal{E}(-d)) &\to H^0(\mathcal{E}(-d+1)) &\to H^0(\mathcal{E}|_{L}(-d+1)) \\
 &\to H^1(\mathcal{E}(-d)) &\to H^1(\mathcal{E}(-d+1)) &\to H^1(\mathcal{E}|_{L}(-d+1)) \\
 &\to H^2(\mathcal{E}(-d)) &\to H^2(\mathcal{E}(-d+1)) &\to 0.
\end{align*}
Note that the first term in each line is 0 by the previous item. Hence, we immediately obtain $h^2(\mathcal{E}(-d+1))=0$.

We have
\begin{align*}
 h^1(\mathcal{E}(-d+1)) &= h^1(\mathcal{E}|_{L}(-d+1)) \\
 &= h^1(\bigoplus_{i=1}^{r} \mathcal{O}_{\mathbb{P}^1}(a_i-d+1)) \\
 &= 0,
\end{align*}
since $a_i-d+1 > -1$ for all $i$.

Next,
\begin{align*}
 h^0(\mathcal{E}(-d+1)) &= h^0(\mathcal{E}|_{L}(-d+1)) \\
 &= h^0(\bigoplus_{i=1}^{r} \mathcal{O}_{\mathbb{P}^1}(a_i-d+1)) \\
 &= \sum_{i=1}^{r} (a_i-d+2) \\
 &= \frac{3r}{2}(d-1) - rd + 2r \\
 &= \frac{r}{2}(d+1).
\end{align*}
This finishes the proof of the second item.

We finish by proving the third item.

Consider the short exact sequence
\[
 0 \to \mathcal{E}(-d+1) \to \mathcal{E}(-d+2) \to \mathcal{E}|_{L}(-d+2) \to 0.
\]
The associated long exact sequence is
\begin{align*}
 0 &\to H^0(\mathcal{E}(-d+1)) &\to H^0(\mathcal{E}(-d+2)) &\to H^0(\mathcal{E}|_{L}(-d+2)) \\
 &\to H^1(\mathcal{E}(-d+1)) &\to H^1(\mathcal{E}(-d+2)) &\to H^1(\mathcal{E}|_{L}(-d+2)) \\
 &\to H^2(\mathcal{E}(-d+1)) &\to H^2(\mathcal{E}(-d+2)) &\to 0.
\end{align*}
Note again that the first terms in the second and third lines are 0 by the previous item.

We have
\begin{align*}
 h^1(\mathcal{E}(-d+2)) &= h^1(\mathcal{E}|_{L}(-d+2)) \\
 &= h^1(\bigoplus_{i=1}^{r} \mathcal{O}_{\mathbb{P}^1}(a_i-d+2)) \\
 &= 0,
\end{align*}
since $a_i-d+2 > 0$ for all $i$.

Finally, from the long exact sequence above, we have
\begin{align*}
 h^0(\mathcal{E}(-d+2)) &= h^0(\mathcal{E}(-d+1)) + h^0(\mathcal{E}|_{L}(-d+2)) \\
 &= \frac{r}{2}(d+1) + h^0(\bigoplus_{i=1}^{r} \mathcal{O}_{\mathbb{P}^1}(a_i-d+2)) \\
 &= \frac{r}{2}(d+1) + \sum_{i=1}^{r} (a_i-d+3) \\
 &= \frac{r}{2}(d+1) + \sum_{i=1}^{r} a_i -rd + 3r \\
 &= r(d+2).
\end{align*}

This finishes the proof.
\end{proof}

\section{Ulrich Line Bundles}

\begin{proposition}\label{prop:ulrich_line_bundles}
The only Ulrich line bundle on $(\mathbb{P}^2, H)$ is $\mathcal{O}_{\mathbb{P}^2}$ and all higher rank Ulrich bundles are simply direct sums of copies of this unique Ulrich line bundle. For $d \geq 2$, there are no Ulrich line bundles on $(\mathbb{P}^2, dH)$.
\end{proposition}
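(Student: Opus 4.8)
The plan is to split the proposition into a classification of Ulrich line bundles and a structure result for higher rank bundles when $d = 1$, treating these in turn. For the line bundle statement, note that every line bundle on $\mathbb{P}^2$ has the form $\mathcal{O}_{\mathbb{P}^2}(a)$, and since $H^1$ of any line bundle on $\mathbb{P}^2$ vanishes, such a bundle automatically satisfies the ACM condition. Hence $\mathcal{O}_{\mathbb{P}^2}(a)$ is Ulrich on $(\mathbb{P}^2, dH)$ precisely when its Hilbert polynomial $\chi(\mathcal{O}_{\mathbb{P}^2}(a + dt)) = \binom{a + dt + 2}{2}$ coincides identically with $d^2 \binom{t+2}{2}$.

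Expanding both sides and comparing coefficients of $t$ reproduces $a = 3(d-1)/2$ (as forced in general by Proposition \ref{prop:c_1} with $r = 1$), while the constant terms give the further relation $(a+1)(a+2) = 2d^2$. Substituting the first equation into the second yields $(3d-1)(3d+1)/4 = 2d^2$, that is $d^2 = 1$; since $d \geq 1$ this forces $d = 1$ and $a = 0$. Thus there is no Ulrich line bundle for $d \geq 2$, and for $d = 1$ the unique Ulrich line bundle is $\mathcal{O}_{\mathbb{P}^2}$.

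For the higher rank statement take $\mathcal{E}$ of rank $r$ Ulrich on $(\mathbb{P}^2, H)$. The crucial observation is that for $d = 1$ the ACM condition reads $H^1(\mathcal{E}(t)) = 0$ for every integer $t$, which is exactly the hypothesis of Horrocks' splitting criterion on $\mathbb{P}^2$. I would invoke it to conclude $\mathcal{E} \cong \bigoplus_{i=1}^{r} \mathcal{O}_{\mathbb{P}^2}(a_i)$. Matching the Hilbert polynomial $\sum_{i} \binom{a_i + t + 2}{2} = r\binom{t+2}{2}$ and comparing the coefficients of $t$ and the constant terms gives $\sum_i a_i = 0$ and $\sum_i a_i^2 = 0$; as the $a_i$ are integers the latter forces every $a_i = 0$, so $\mathcal{E} \cong \mathcal{O}_{\mathbb{P}^2}^{\oplus r}$.

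There is no serious obstacle here: the argument is essentially a coefficient comparison once splitting is known. The one point that must be used with care is that the reduction to Horrocks' criterion is special to $d = 1$, where the embedded twists exhaust all of $\mathbb{Z}$; for $d \geq 2$ the ACM condition only controls the twists by multiples of $d$, so one cannot split $\mathcal{E}$ this way, which is precisely why the higher rank decomposition is asserted only in the case $d = 1$. Alternatively, for $d = 1$ one could run Beilinson's spectral sequence directly: the vanishing $H^\bullet(\mathcal{E}(-1)) = H^\bullet(\mathcal{E}(-2)) = 0$ leaves only the $p = 0$ column, and after checking $H^1(\mathcal{E}) = H^2(\mathcal{E}) = 0$ (the latter via restriction to a line, using $H^2(\mathcal{E}(-1)) = 0$) one again obtains $\mathcal{E} \cong \mathcal{O}_{\mathbb{P}^2}^{\oplus r}$.
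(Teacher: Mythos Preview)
Your proof is correct. For the classification of Ulrich line bundles you do exactly what the paper does: compute the Hilbert polynomial of $\mathcal{O}_{\mathbb{P}^2}(a)$ with respect to $dH$, compare coefficients with $d^2\binom{t+2}{2}$, and solve the resulting system to obtain $d=1$, $a=0$.

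For the higher rank statement when $d=1$, your route differs from the paper's. The paper simply invokes \cite{CKM2013}*{Proposition 2.2 (iv)} and moves on. You instead give a self-contained argument: the ACM condition for $d=1$ is precisely $H^1(\mathcal{E}(t))=0$ for all $t\in\mathbb{Z}$, so Horrocks' criterion splits $\mathcal{E}$ into line bundles, and then the Hilbert polynomial constraints $\sum a_i=0$, $\sum a_i^2=0$ force every summand to be trivial. Your alternative via the Beilinson spectral sequence (using the Ulrich vanishings $H^\bullet(\mathcal{E}(-1))=H^\bullet(\mathcal{E}(-2))=0$ to kill the $p=-1$ and $p=-2$ columns) is also valid and is in fact the same mechanism the paper later exploits in Section~4 for general $d$. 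The advantage of your approach is that it is self-contained and makes transparent why $d=1$ is special; the paper's citation is of course shorter.
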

\begin{proof}
The first statement of the theorem follows trivially from \cite{CKM2013}*{Proposition 2.2 (iv)}.

Now let $\mathcal{O}_{\mathbb{P}^2}(t_0)$ be an Ulrich line bundle on $(\mathbb{P}^2,dH)$. We compute its Hilbert polynomial.
\begin{align*}
 \chi(\mathcal{O}_{\mathbb{P}^2}(t_0+dt)) &= \binom{t_0+dt+2}{2} \\
 &= \frac{(dt+t_0+2)(dt+t_0+1)}{2} \\
 &= \frac{d^2 t^2 + d(2t_0+3)t + t_0^2+3t_0+2}{2}.
\end{align*}
By \cite{CKM2013}*{Proposition 2.3}, the Hilbert polynomial of the Ulrich line bundle $\mathcal{O}_{\mathbb{P}^2}(t_0)$ must be equal to
\[
 d^2 \binom{t+2}{2}.
\]
Comparing coefficients, we obtain the equations
\begin{align*}
 3d^2 &= d(2t_0+3) \\
 2d^2 &= t_0^2 + 3t_0 + 2.
\end{align*}
Solving this system, we obtain $d=1$ and $t_0=0$.
\end{proof}

\section{The Beilinson Spectral Sequence And The Cohomology Table}
In this section, we derive the cohomology table used in the Beilinson spectral sequence and we use it to prove that every Ulrich bundle occurs as the right-hand term of a suitable short exact sequence. We also prove a converse result; namely if an injective morphism of sheaves satisfies certain conditions, then its cokernel is an Ulrich bundle.

Recall that for a coherent sheaf $\mathcal{F}$ on $\mathbb{P}^2$, the Beilinson spectral sequence has the form
\[
 E_1^{p,q} := H^q(\mathcal{F} \otimes \Omega^{-p}(-p)) \otimes \mathcal{O}_{\mathbb{P}^2}(p) \Rightarrow E^{p+q} = \left\{ \begin{array}{cl}
 \mathcal{F} & \textrm{if }p+q=0\\
 0 & \textrm{otherwise.}\end{array}\right.
\]
(Here, $\Omega$ denotes the cotangent bundle of $\mathbb{P}^2$.)
\begin{remark}
There is another form of the Beilinson spectral sequence, with $\Omega^{-p}(-p)$ and $\mathcal{O}_{\mathbb{P}^2}(p)$ interchanged; but we shall not use it.
\end{remark}
The entries on the first page of this spectral sequence can only be nonzero for $p \in [-2,0]$ and $q \in [0,2]$.
\begin{theorem}
Let $\mathcal{E}$ be an Ulrich bundle of rank $r$ on $(\mathbb{P}^2, dH)$ and let $\mathcal{F}=\mathcal{E}(-d+1)$. Then we have the following table for the values of $h^q(\mathcal{F} \otimes \Omega^{-p}(-p))$:
\begin{table}[!h]\label{table}
\centering
\bgroup
\def\arraystretch{1.5}
\begin{tabular}{cccc}
\cline{1-3}
\multicolumn{1}{|c|}{0} & \multicolumn{1}{c|}{0} & \multicolumn{1}{c|}{0} & q=2 \\ \cline{1-3}
\multicolumn{1}{|c|}{0} & \multicolumn{1}{c|}{0} & \multicolumn{1}{c|}{0} & q=1 \\ \cline{1-3}
\multicolumn{1}{|c|}{0} & \multicolumn{1}{c|}{$\frac{r}{2}(d-1)$} & \multicolumn{1}{c|}{$\frac{r}{2}(d+1)$} & q=0 \\ \cline{1-3}
p=-2                    & p=-1                   & p=0                    &    
\end{tabular}
\egroup
\caption{The cohomology table for $\mathcal{F} \otimes \Omega^{-p}(-p)$}
\end{table}
\end{theorem}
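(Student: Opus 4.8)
The plan is to identify each of the three sheaves $\mathcal{F} \otimes \Omega^{-p}(-p)$ occurring in the table and compute its cohomology, handling the columns $p=0$ and $p=-2$ by direct citation and the column $p=-1$ by a short exact sequence. First I would record the identifications: for $p=0$ we have $\Omega^0 = \mathcal{O}_{\mathbb{P}^2}$, so $\mathcal{F} \otimes \Omega^0(0) = \mathcal{E}(-d+1)$; and for $p=-2$ we have $\Omega^2(2) = \mathcal{O}_{\mathbb{P}^2}(-3)(2) = \mathcal{O}_{\mathbb{P}^2}(-1)$, so $\mathcal{F} \otimes \Omega^2(2) = \mathcal{E}(-d)$. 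The whole $p=-2$ column (all three groups $h^q(\mathcal{E}(-d))=0$) and the $p=0$ column ($h^0 = \frac{r}{2}(d+1)$, $h^1=h^2=0$) can then be read off directly from Lemma \ref{lemma:various_vanishings}.

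The real content is the $p=-1$ column, i.e.\ the cohomology of $\mathcal{F} \otimes \Omega(1)$. I would tensor the dual (twisted) Euler sequence
\[
 0 \to \Omega(1) \to \mathcal{O}_{\mathbb{P}^2}^{\oplus 3} \to \mathcal{O}_{\mathbb{P}^2}(1) \to 0
\]
with $\mathcal{F} = \mathcal{E}(-d+1)$ to obtain
\[
 0 \to \mathcal{F} \otimes \Omega(1) \to \mathcal{E}(-d+1)^{\oplus 3} \to \mathcal{E}(-d+2) \to 0,
\]
and pass to the associated long exact sequence. Since $h^1(\mathcal{E}(-d+1)) = h^2(\mathcal{E}(-d+1)) = 0$ and $h^1(\mathcal{E}(-d+2))=0$ by Lemma \ref{lemma:various_vanishings}, the long exact sequence forces $h^2(\mathcal{F}\otimes\Omega(1))=0$ at once (it is squeezed between $H^1(\mathcal{E}(-d+2))=0$ and $H^2(\mathcal{E}(-d+1))^{\oplus3}=0$), and reduces the remaining computation to the segment
\[
 H^0(\mathcal{E}(-d+1))^{\oplus 3} \xrightarrow{\ \phi\ } H^0(\mathcal{E}(-d+2)) \to H^1(\mathcal{F}\otimes\Omega(1)) \to 0,
\]
so that $h^1(\mathcal{F}\otimes\Omega(1))$ is the dimension of the cokernel of $\phi$ and $h^0(\mathcal{F}\otimes\Omega(1))$ is the dimension of its kernel.

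The crux, and the step I expect to be the main obstacle, is showing that $\phi$ — which is the multiplication map $H^0(\mathcal{E}(-d+1)) \otimes H^0(\mathcal{O}_{\mathbb{P}^2}(1)) \to H^0(\mathcal{E}(-d+2))$ — is surjective. I would deduce this from Castelnuovo--Mumford regularity: the sheaf $\mathcal{E}(-d+1)$ is $0$-regular on $\mathbb{P}^2$. Indeed $H^1(\mathcal{E}(-d+1)(-1)) = H^1(\mathcal{E}(-d)) = 0$ by the first item of Lemma \ref{lemma:various_vanishings}, while $H^2(\mathcal{E}(-d+1)(-2)) = H^2(\mathcal{E}(-d-1)) = 0$ by Serre duality together with Proposition \ref{prop:Ulrich_duality}: this group is dual to $H^0(\mathcal{E}^{\vee}(d-2)) = H^0\big(\mathcal{E}^{\vee}(3d-3)(-2d+1)\big)$, a twist of the Ulrich bundle $\mathcal{E}^{\vee}(3d-3)$ by $-2d+1 \le -d$, which has no sections. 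Mumford's theorem then guarantees that $\phi$ is surjective, whence $h^1(\mathcal{F}\otimes\Omega(1))=0$. A final dimension count using $h^0(\mathcal{E}(-d+1)) = \frac{r}{2}(d+1)$ and $h^0(\mathcal{E}(-d+2)) = r(d+2)$ gives
\[
 h^0(\mathcal{F}\otimes\Omega(1)) = 3\cdot\tfrac{r}{2}(d+1) - r(d+2) = \tfrac{r}{2}(d-1),
\]
which is the last remaining entry of the table.
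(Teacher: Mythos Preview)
Your proof is correct. The treatment of the columns $p=0$ and $p=-2$, and of the entries $(p,q)=(-1,2)$ and $(-1,0)$, matches the paper's essentially line for line (you use the same Euler sequence, just named differently). The genuine divergence is at the entry $(p,q)=(-1,1)$.

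The paper does \emph{not} argue via Castelnuovo--Mumford regularity. Instead it assumes $h^1(\mathcal{F}\otimes\Omega(1))=a>0$ and reads off from the Beilinson spectral sequence itself a surjection $\mathcal{E}(-d+1)\twoheadrightarrow\mathcal{O}_{\mathbb{P}^2}(-1)^a$; twisting by $-1$ gives a surjection $\mathcal{E}(-d)\twoheadrightarrow\mathcal{O}_{\mathbb{P}^2}(-2)^a$, which contradicts the semistability of the Ulrich bundle $\mathcal{E}(-d)$ because the target has strictly smaller reduced Hilbert polynomial. Your route avoids invoking semistability at this point and instead feeds Ulrich duality (Proposition~\ref{prop:Ulrich_duality}) into a $0$-regularity check for $\mathcal{E}(-d+1)$, so that Mumford's theorem forces the multiplication map $\phi$ to be onto. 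Both arguments are short; yours is a bit more self-contained (it stays entirely within cohomology computations already done in Lemma~\ref{lemma:various_vanishings} and the duality proposition), while the paper's argument has the pleasant feature of using the very spectral sequence whose $E_1$-page is being computed.
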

\begin{proof}
We start with $p=-2$. Note that $\Omega^{2}(2)=\mathcal{O}_{\mathbb{P}^2}(-1)$ and $\mathcal{F} \otimes \Omega^{-p}(-p) = \mathcal{E}(-d)$. All cohomologies vanish by Lemma \ref{lemma:various_vanishings}, which establishes the left column of the table.

Now consider the short exact sequence
\[
 0 \to \Omega \to \mathcal{O}_{\mathbb{P}^2}(-1)^{3} \to \mathcal{O}_{\mathbb{P}^2} \to 0.
\]
Tensoring this short exact sequence with $\mathcal{F}(1)=\mathcal{E}(-d+2)$, we obtain
\[
 0 \to \mathcal{E}(-d+2) \otimes \Omega \to \mathcal{E}(-d+1)^3 \to \mathcal{E}(-d+2) \to 0.
\]
The associated long exact sequence is
\begin{align*}
 0 &\to H^0(\mathcal{E}(-d+2) \otimes \Omega) &\to H^0(\mathcal{E}(-d+1)^3) &\to H^0(\mathcal{E}(-d+2)) \\
 &\to H^1(\mathcal{E}(-d+2) \otimes \Omega) &\to H^1(\mathcal{E}(-d+1)^3) &\to H^1(\mathcal{E}(-d+2)) \\
 &\to H^2(\mathcal{E}(-d+2) \otimes \Omega) &\to H^2(\mathcal{E}(-d+1)^3) &\to H^2(\mathcal{E}(-d+2)) \to 0.
\end{align*}

For the remaining entries, we proceed separately.

\underline{$(p,q)=(-1,2):$} $h^2(\mathcal{E}(-d+1))=h^1(\mathcal{E}(-d+2))=0$ by Lemma \ref{lemma:various_vanishings}, which forces $h^2(\mathcal{E}(-d+2) \otimes \Omega)=h^2(\mathcal{F} \otimes \Omega^{1}(1))=0$.

\underline{$(p,q)=(0,2):$} $h^2(\mathcal{F} \otimes \Omega^{0}(0))=h^2(\mathcal{E}(-d+1))=0$ by Lemma \ref{lemma:various_vanishings}.

\underline{$(p,q)=(0,1):$} $h^1(\mathcal{F} \otimes \Omega^{0}(0))=h^1(\mathcal{E}(-d+1))=0$ by Lemma \ref{lemma:various_vanishings}.

\underline{$(p,q)=(0,0):$} $h^0(\mathcal{F} \otimes \Omega^{0}(0))=h^0(\mathcal{E}(-d+1))=\frac{r}{2}(d+1)$ by Lemma \ref{lemma:various_vanishings}.

\underline{$(p,q)=(-1,1):$} Assume that $h^1(\mathcal{F} \otimes \Omega^{1}(1)) =a \neq 0$. Then the spectral sequence gives us a surjection
\[
 \mathcal{F}=\mathcal{E}(-d+1) \to \mathcal{O}_{\mathbb{P}^2}(-1)^a.
\]
Twisting by $-1$, we get a surjection
\[
 \mathcal{E}(-d) \to \mathcal{O}_{\mathbb{P}^2}(-2)^a.
\]
But $\mathcal{E}(-d)$ is semistable by \cite{CKM2012}*{Proposition 2.11}; and since the reduced Hilbert polynomial of $\mathcal{E}(-d)$ is not smaller than or equal to (with respect to the lexicographical ordering) the reduced Hilbert polynomial of $\mathcal{O}_{\mathbb{P}^2}(-2)^a$, we conclude that $a=0$.

\underline{$(p,q)=(-1,0):$} Since $h^1(\mathcal{E}(-d+2) \otimes \Omega)=h^1(\mathcal{F} \otimes \Omega^{1}(1))=0$ by the previous item, the long exact sequence above gives us
\[
 h^0(\mathcal{F} \otimes \Omega^{1}(1)) = h^0(\mathcal{E}(-d+2) \otimes \Omega) = 3 h^0(\mathcal{E}(-d+1)) - h^0(\mathcal{E}(-d+2)).
\]
By Lemma \ref{lemma:various_vanishings}, we have $h^0(\mathcal{E}(-d+1)) = \frac{r}{2}(d+1)$ and $h^0(\mathcal{E}(-d+2)) = r(d+2)$. Calculating, we find $h^0(\mathcal{F} \otimes \Omega^{1}(1)) = \frac{r}{2}(d-1)$. This completes the table.
\end{proof}
\begin{corollary}\label{cor:short_exact_sequence}
Let $\mathcal{E}$ be an Ulrich bundle of rank $r$ on $(\mathbb{P}^2, dH)$. Then we have
\[
 0 \to \mathcal{O}_{\mathbb{P}^2}^{\frac{r}{2}(d-1)}(d-2) \to \mathcal{O}_{\mathbb{P}^2}^{\frac{r}{2}(d+1)}(d-1) \to \mathcal{E} \to 0.
\]
\end{corollary}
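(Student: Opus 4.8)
The plan is to feed the cohomology table of the preceding theorem directly into the Beilinson spectral sequence for $\mathcal{F} = \mathcal{E}(-d+1)$ and read off the resolution. First I would record that, by the table, the only nonzero entries of the $E_1$ page lie on the bottom row $q=0$, namely at $p=-1$ and $p=0$. Substituting into the definition $E_1^{p,q} = H^q(\mathcal{F} \otimes \Omega^{-p}(-p)) \otimes \mathcal{O}_{\mathbb{P}^2}(p)$, these two terms become the locally free sheaves
\[
 E_1^{-1,0} = \mathcal{O}_{\mathbb{P}^2}^{\frac{r}{2}(d-1)}(-1), \qquad E_1^{0,0} = \mathcal{O}_{\mathbb{P}^2}^{\frac{r}{2}(d+1)},
\]
while every other $E_1^{p,q}$ vanishes.

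Next I would observe that, because all nonzero terms sit in the single row $q=0$, every differential $d_i$ with $i \geq 2$ is automatically zero (it would have to land in the row $q=-1$, which is zero), so the spectral sequence degenerates at the $E_2$ page, i.e. $E_2 = E_\infty$. The only possibly nonzero differential is therefore the single map on the $E_1$ page,
\[
 d_1 \colon \mathcal{O}_{\mathbb{P}^2}^{\frac{r}{2}(d-1)}(-1) \longrightarrow \mathcal{O}_{\mathbb{P}^2}^{\frac{r}{2}(d+1)}.
\]
The abutment then pins this map down. Since the spectral sequence converges to $\mathcal{F}$ in total degree $0$ and to $0$ in every other total degree, and since $E_\infty^{-1,0}$ is the only surviving contribution in total degree $-1$, I would conclude $E_2^{-1,0} = \ker(d_1) = 0$ (the incoming differential from $E_1^{-2,0}=0$ is trivial), so $d_1$ is injective. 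Similarly $E_\infty^{0,0} = E_2^{0,0} = \operatorname{coker}(d_1)$ is the only contribution in total degree $0$, hence equals $\mathcal{F}$. Together these yield the short exact sequence
\[
 0 \to \mathcal{O}_{\mathbb{P}^2}^{\frac{r}{2}(d-1)}(-1) \to \mathcal{O}_{\mathbb{P}^2}^{\frac{r}{2}(d+1)} \to \mathcal{F} \to 0,
\]
and recalling $\mathcal{F} = \mathcal{E}(-d+1)$, twisting the whole sequence by $\mathcal{O}_{\mathbb{P}^2}(d-1)$ produces exactly the claimed resolution.

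Since the cohomology table does essentially all the work, I expect no serious obstacle here; the only points demanding care are purely bookkeeping. I would be careful about the sign convention in the index $p$ of $E_1^{p,q}$, verify that degeneration at $E_2$ together with the vanishing of the total-degree-$(-1)$ abutment genuinely forces injectivity of $d_1$, and confirm that no hidden extension problem intervenes — which it does not, as each relevant total degree receives a contribution from a single graded piece of the filtration.
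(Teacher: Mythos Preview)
Your proposal is correct and follows exactly the same route as the paper: read the $E_1$ page from the cohomology table, observe that only the bottom row survives so the spectral sequence collapses to a two-term complex whose cokernel is $\mathcal{F}=\mathcal{E}(-d+1)$, and then twist by $d-1$. The paper states this in two lines; you have simply spelled out the degeneration and the identification of kernel and cokernel in more detail.
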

\begin{proof}
From Beilinson's spectral sequence and Table 1, we obtain
\[
 0 \to \mathcal{O}_{\mathbb{P}^2}^{\frac{r}{2}(d-1)}(-1) \to \mathcal{O}_{\mathbb{P}^2}^{\frac{r}{2}(d+1)} \to \mathcal{F}=\mathcal{E}(-d+1) \to 0.
\]
Twisting by $d-1$, we obtain the desired result.
\end{proof}

We now prove a converse result.
\begin{theorem}\label{thm:ulrich_from_map}
Let $\mathcal{E}$ be a vector bundle of rank $r$ on $\mathbb{P}^2$ admitting a resolution 
\[
 0 \to \mathcal{O}_{\mathbb{P}^2}^{\frac{r}{2}(d-1)}(d-2) \to \mathcal{O}_{\mathbb{P}^2}^{\frac{r}{2}(d+1)}(d-1) \to \mathcal{E} \to 0,
\]
such that $H^{1}(\mathcal{E}(-2d))=H^{1}(\mathcal{E}(-3d))=\ldots=H^{1}(\mathcal{E}(-\alpha d))=0$ for $\alpha=\left \lceil{\frac{r+2}{2}}\right \rceil$. Then $\mathcal{E}$ is an Ulrich bundle on $(\mathbb{P}^2,dH)$.
\end{theorem}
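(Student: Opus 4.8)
The plan is to verify the two conditions in the Ulrich characterization \cite{CKM2013}*{Proposition 2.3}: that the Hilbert polynomial of $\mathcal{E}$ with respect to $dH$ is $d^2 r \binom{t+2}{2}$, and that $\mathcal{E}$ is ACM, i.e. $H^1(\mathcal{E}(dt))=0$ for all $t \in \mathbb{Z}$. The Hilbert polynomial is the routine part: taking Euler characteristics in the resolution twisted by $dt$ and using $\chi(\mathcal{O}_{\mathbb{P}^2}(m))=\binom{m+2}{2}$ gives
\[
 \chi(\mathcal{E}(dt)) = \frac{r}{2}(d+1)\binom{dt+d+1}{2} - \frac{r}{2}(d-1)\binom{dt+d}{2},
\]
which collapses to $\frac{r}{2}(dt+d)(dt+2d) = d^2 r \binom{t+2}{2}$; the same computation recovers $c_1(\mathcal{E})=\frac{3r}{2}(d-1)$ as a consistency check with Proposition \ref{prop:c_1}. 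Thus the whole difficulty is the ACM vanishing, which I would split into three ranges of $t$.

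For the high twists $t \geq -1$ the vanishing is immediate from the resolution: twisting by $dt$ and using $H^1(\mathcal{O}_{\mathbb{P}^2}(\ast))=0$ yields an injection $H^1(\mathcal{E}(dt)) \hookrightarrow H^2(\mathcal{O}_{\mathbb{P}^2}^{\frac{r}{2}(d-1)}(d-2+dt))$, and the target vanishes exactly when $d-2+dt \geq -2$, that is $t \geq -1$. The middle twists $t=-2,\dots,-\alpha$ are literally the hypotheses $H^1(\mathcal{E}(-jd))=0$ for $j=2,\dots,\alpha$.

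The remaining range $t \leq -\alpha-1$ is infinite and is where the resolution gives nothing; this is the main obstacle, and I would clear it with Serre duality and a propagation argument. Since $\mathcal{E}$ is locally free, dualizing the resolution gives an exact sequence $0 \to \mathcal{E}^{\vee} \to \mathcal{O}_{\mathbb{P}^2}^{\frac{r}{2}(d+1)}(1-d) \stackrel{f^{\vee}}{\to} \mathcal{O}_{\mathbb{P}^2}^{\frac{r}{2}(d-1)}(2-d) \to 0$. Twisting $f^{\vee}$ by $d-2$ and passing to global sections in all degrees exhibits the graded module $C := \bigoplus_m H^1(\mathcal{E}^{\vee}(d-2+m))$ as the cokernel of a map of free modules into $S^{\frac{r}{2}(d-1)}$, where $S$ is the homogeneous coordinate ring of $\mathbb{P}^2$. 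In particular $C$ is \emph{generated in degree $0$}, so $C_{j+1}=S_1\cdot C_j$ and vanishing propagates upward: $C_{j_0}=0$ forces $C_j=0$ for every $j \geq j_0$. By Serre duality $H^1(\mathcal{E}(n)) \cong (C_{-n-1-d})^{\vee}$, so the single extreme hypothesis $H^1(\mathcal{E}(-\alpha d))=0$ is exactly $C_{(\alpha-1)d-1}=0$; propagation then annihilates $C_j$ for all $j \geq (\alpha-1)d-1$, which reads back as $H^1(\mathcal{E}(dt))=0$ for all $t \leq -\alpha$. Combined with $t\geq -1$ and the middle hypotheses, this covers every $t \in \mathbb{Z}$.

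I expect the propagation step to be the crux. Its content is that the intermediate cohomology module of $\mathcal{E}^{\vee}(d-2)$ is generated in a single degree, so that a \emph{finite} set of vanishings --- indeed essentially the extreme one, $j=\alpha$ --- forces the remaining infinitely many; the lower hypotheses $j=2,\dots,\alpha-1$ are then used only to fill the finite gap $-\alpha+1 \leq t \leq -2$. The precise value $\alpha=\lceil (r+2)/2\rceil$ enters exactly here, guaranteeing that the seed degree $(\alpha-1)d-1$ and the gap are positioned so that the three ranges meet without leaving any multiple of $d$ uncovered.
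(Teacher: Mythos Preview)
Your argument is correct, and the Hilbert-polynomial computation and the treatment of the ranges $t\geq -1$ and $-\alpha\leq t\leq -2$ coincide with the paper's. The genuine difference is in how you handle the infinite tail $t\leq -\alpha-1$.

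The paper restricts $\mathcal{E}$ to a line $L$, writes $\mathcal{E}|_L=\bigoplus_{i=1}^{r}\mathcal{O}_L(a_i)$, and uses the resolution to get the lower bound $a_i\geq d-1$ together with $\sum a_i=\tfrac{3r}{2}(d-1)$. A short numerical argument, which \emph{requires} the specific value $\alpha=\lceil(r+2)/2\rceil$, then forces $a_i<\alpha d$ for all $i$, so $H^0(\mathcal{E}|_L(-\alpha d))=0$; the line exact sequence and the hypothesis $H^1(\mathcal{E}(-\alpha d))=0$ then give $H^1(\mathcal{E}(-\alpha d-1))=0$, and one propagates downward by induction.

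Your route via Serre duality is cleaner: dualizing the resolution exhibits $\bigoplus_m H^1(\mathcal{E}^\vee(d-2+m))$ as a quotient of $S^{\frac{r}{2}(d-1)}$, hence generated in degree~$0$, so a single vanishing propagates upward in degree. This has the pleasant side effect that the particular value of $\alpha$ plays no role in your proof; in fact, running your propagation from the seed $t=-2$ (i.e.\ $C_{d-1}=0$) already kills $C_j$ for all $j\geq d-1$ and hence $H^1(\mathcal{E}(dt))$ for every $t\leq -2$. In other words, your method actually shows that the single hypothesis $H^1(\mathcal{E}(-2d))=0$ suffices, whereas the paper's line-restriction argument genuinely consumes the bound on $\alpha$ to ensure $a_1<\alpha d$. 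Your final paragraph slightly overstates the role of $\alpha$ in your own argument; it is essential in the paper's approach, but in yours it merely labels where you chose to start the propagation.
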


\begin{proof}
Since $\mathcal{E}$ admits the given resolution, we have
\begin{eqnarray*}
	\chi(\mathcal{E}(td))&=&\chi(\mathcal{O}_{\mathbb{P}^2}^{\frac{r}{2}(d+1)}(d-1+td))-\chi(\mathcal{O}_{\mathbb{P}^2}^{\frac{r}{2}(d-1)}(d-2+td))\\
	&=& \frac{r}{2}(d+1)\chi(\mathcal{O}_{\mathbb{P}^2}(d-1+td))- \frac{r}{2}(d-1) \chi(\mathcal{O}_{\mathbb{P}^2}(d-2+td))\\
	&=& \frac{r}{2}(d+1) \binom{d-1+td+2}{2}-\frac{r}{2}(d-1) \binom{d-2+td+2}{2}\\
	&=& \frac{r}{4}[(d+1) ((t+1)d+1) ((t+1)d)- (d-1)((t+1)d) ((t+1)d-1)]\\
	&=& \frac{r}{4}((t+1)d)[(d+1) ((t+1)d+1) - (d-1)((t+1)d-1)]\\
	&=& \frac{r}{2}d^{2} (t+1)(t+2)=d^{2} r \binom{t+2}{2}.
\end{eqnarray*}

Next, we shall prove that $\mathcal{E}$ is ACM with respect to the polarization $dH$; i.e. $H^1(\mathcal{E}(td))=0$ for all $t\in \mathbb{Z}$.

If $t\geq -1$, then consider the long exact sequence obtained by twisting given sequence by $td$:
\[
 \ldots \to H^{1}(\mathcal{O}_{\mathbb{P}^2}^{\frac{r}{2}(d+1)}(d-1+td)) \to H^{1}(\mathcal{E}(td)) \to H^{2}(\mathcal{O}_{\mathbb{P}^2}^{\frac{r}{2}(d-1)}(d-2+td)) \to \ldots.
\]

Since $H^{1}(\mathcal{O}_{\mathbb{P}^2}^{\frac{r}{2}(d+1)}(d-1+td))=0$ for all $t\in \mathbb{Z}$ and $H^{2}(\mathcal{O}_{\mathbb{P}^2}^{\frac{r}{2}(d-1)}(d-2+td))=H^{0}(\mathcal{O}_{\mathbb{P}^2}^{\frac{r}{2}(d-1)}(2-d-td-3))=0$ for all $t\geq -1$, $H^{1}(\mathcal{E}(td))=0$ for all $t\geq -1$.

For $t=-2, -3, \ldots, - \alpha$, we have $H^{1}(\mathcal{E}(td))=0$ by assumption.

Now consider the short exact sequence obtained by twisting the given resolution by $-d$ and then restricting to a line $L \subset \mathbb{P}^2$:
\[
 0 \to \mathcal{O}_{L}^{\frac{r}{2}(d-1)}(-2) \to \mathcal{O}_{L}^{\frac{r}{2}(d+1)}(-1) \to \mathcal{E}(-d)|_{L} \to 0.
\]

By a well-known result of Grothendieck, $\mathcal{E}|_{L}= \bigoplus_{i=1}^{r} \mathcal{O}(a_{i})$ for a unique collection of integers $a_{1}\geq\ldots\geq a_{r}$. Since $H^{1}(\mathcal{O}_{L}(-1))=0$, it follows that $H^{1}(\mathcal{E}|_{L}(-d))=0$. Therefore, $a_{r}-d \geq -1$; i.e. $a_{r}\geq d-1$. Since
\[
 deg(\mathcal{E}|_{L})=\frac{r}{2}(d+1)(d-1)-\frac{r}{2}(d-1)(d-2)=\frac{3r}{2}(d-1),
\]
we have $\sum^{r}_{i=1}a_{i}=\frac{3r}{2}(d-1)$.

If we tensor the sequence
\[
 0 \to \mathcal{O}_{\mathbb{P}^2} (-1) \to \mathcal{O}_{\mathbb{P}^2} \to \mathcal{O}_{\mathbb{P}^2}|_{L} \to 0
\]
by $\mathcal{E}(-\alpha d)$, we obtain
\begin{equation}\label{eq:ses}
 0 \to \mathcal{E} (-\alpha d-1) \to \mathcal{E} (-\alpha d) \to \mathcal{E}|_{L} (-\alpha d) = \bigoplus^{r}_{i=1} \mathcal{O} (a_{i}- \alpha d) \to 0.
\end{equation}

If we were to assume $a_{1} - \alpha d \geq 0$, then we would have
\begin{eqnarray*}
 \sum^{r}_{i=1} a_{i} &\geq& \alpha d + (r-1)(d-1),\\
 \frac{3r}{2}(d-1) &\geq& \alpha d + (r-1)(d-1),\\
 dr &\geq& (2 \alpha -2)d+r+2,
\end{eqnarray*}
which is a contradiction. So, $a_{1}- \alpha d < 0$; and hence $a_{i}- \alpha d < 0$ for $i=1,\ldots, r$. Therefore, $H^{0}(a_{i}- \alpha d)=0$ for $i=1,\ldots, r$; i.e. $H^{0}(\mathcal{E}|_{L}(- \alpha d))=0$. We also have $H^{1}(\mathcal{E}(-\alpha d))=0$ by assumption. It follows that $H^{1}(\mathcal{E}(-\alpha d -1))=0$. Using negative twists of the sequence \ref{eq:ses} above and by induction, it can be easily shown that $H^{1}(\mathcal{E}(-\alpha d -k))=0$ for all $k\geq 0$. In particular, for integers $t \leq - \alpha -1$, we have $H^{1}(\mathcal{E}(t d))=0$. This proves that $\mathcal{E}$ is an ACM vector bundle with respect to the polarization $dH$.

Therefore, $\mathcal{E}$ is an Ulrich bundle on $(\mathbb{P}^2,dH)$ by \cite{CKM2013}*{Proposition 2.3}.
\end{proof}

\section{The Case $d=2$}
By Corollary \ref{cor:rank_and_degree}, the Ulrich bundles on $(\mathbb{P}^2, 2H)$ must have even rank. In this section, we prove that there is a unique Ulrich bundle of rank 2 on $(\mathbb{P}^2, 2H)$ and that all the other Ulrich bundles are direct sums of copies of this unique Ulrich bundle of rank 2.

\begin{lemma}\label{lemma:h_1_e_0}
Let $\mathcal{E}$ be an Ulrich bundle of rank 2 on $(\mathbb{P}^2, 2H)$. Then $\mathcal{E}$ is stable, and we have $h^1(\mathcal{E} \otimes \mathcal{E}^{\vee})=0$.
\end{lemma}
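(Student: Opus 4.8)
The plan is to reduce stability to the nonexistence of Ulrich line bundles proved in Proposition \ref{prop:ulrich_line_bundles}, and then to read off the cohomological vanishing directly from the dimension formula of Corollary \ref{cor:h1_simple}.

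First I would recall that, by Lemma \ref{lem:ulrich_stability}, the bundle $\mathcal{E}$ is Gieseker-semistable and that, for Ulrich bundles, Gieseker-stability and $\mu$-stability coincide; so it suffices to rule out strict semistability. Suppose, for a contradiction, that $\mathcal{E}$ is strictly semistable. Since $\mathcal{E}$ has rank $2$, any Jordan-H\"{o}lder filtration must have exactly two graded pieces, each of rank $1$, and by Lemma \ref{lem:ulrich_stability} these pieces are stable Ulrich bundles, i.e. Ulrich line bundles on $(\mathbb{P}^2, 2H)$. This contradicts Proposition \ref{prop:ulrich_line_bundles}, which asserts that there are no Ulrich line bundles for $d \geq 2$. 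Hence $\mathcal{E}$ is stable. (Alternatively, since the $2H$-slope of $\mathcal{E}$ equals $3$ by Proposition \ref{prop:c_1} while every sub-line-bundle $\mathcal{O}_{\mathbb{P}^2}(a)$ has even $2H$-slope $2a$, no sub-line-bundle can share the slope of $\mathcal{E}$, so a $\mu$-semistable $\mathcal{E}$ is automatically $\mu$-stable.)

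Having established stability, $\mathcal{E}$ is in particular simple, so $h^0(\mathcal{E} \otimes \mathcal{E}^\vee) = 1$ and Corollary \ref{cor:h1_simple} applies. Substituting $r = 2$ and $d = 2$ yields
\[
 h^1(\mathcal{E} \otimes \mathcal{E}^\vee) = \frac{1}{4}\bigl(4 + 4(4 - 5)\bigr) = 0,
\]
as claimed. There is no serious obstacle here: the single idea that carries the argument is that strict semistability of a rank-$2$ Ulrich bundle would manufacture an Ulrich line bundle, which is already excluded; once stability---and hence simplicity---is in place, the vanishing is an immediate numerical evaluation of Corollary \ref{cor:h1_simple}.
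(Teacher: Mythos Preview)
Your argument is correct and follows essentially the same route as the paper: stability is deduced from the absence of Ulrich line bundles (Proposition~\ref{prop:ulrich_line_bundles}) together with the Jordan--H\"older description of Ulrich bundles, and the vanishing of $h^1(\mathcal{E}\otimes\mathcal{E}^\vee)$ comes from simplicity plus the Euler-characteristic computation. The only cosmetic difference is that the paper recomputes $\chi$, $h^0$, and $h^2$ on the spot rather than invoking Corollary~\ref{cor:h1_simple}, which packages the same ingredients.
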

\begin{proof}
Since there are no Ulrich line bundles on $(\mathbb{P}^2, 2H)$ by Proposition \ref{prop:ulrich_line_bundles}, $\mathcal{E}$ is stable by \cite{CKM2012}*{Lemma 2.15}\footnote{We note that even though that lemma is stated for hypersurfaces, the proof applies verbatim to any projective variety.}.

By \cite{CHGS2012}*{Proposition 2.12}, we have $\chi(\mathcal{E} \otimes \mathcal{E}^{\vee})=1$. Now, since $\mathcal{E}$ is stable, it is simple and hence $h^0(\mathcal{E} \otimes \mathcal{E}^{\vee})=1$. Also,
\begin{align*}
 h^2(\mathcal{E} \otimes \mathcal{E}^{\vee}) &= h^0(\mathcal{E} \otimes  \mathcal{E}^{\vee} (-3)) \\
 &= hom(\mathcal{E}(3), \mathcal{E}) \\
 &= 0
\end{align*}
since the reduced Hilbert polynomial of $\mathcal{E}(3)$ is greater then the reduced Hilbert polynomial of $\mathcal{E}$ and both vector bundles are semistable. This implies $h^1(\mathcal{E} \otimes \mathcal{E}^{\vee})=0$.
\end{proof}

\begin{theorem}\label{thm:unique_rank_2_on_degree_2}
There is a unique Ulrich bundle $\mathcal{E}_0$ of rank 2 on $(\mathbb{P}^2, 2H)$.
\end{theorem}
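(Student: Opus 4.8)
The plan is to leverage the canonical resolution from Corollary~\ref{cor:short_exact_sequence}, which for $r=2$ and $d=2$ reads
\[
0 \to \mathcal{O}_{\mathbb{P}^2} \stackrel{\phi}{\to} \mathcal{O}_{\mathbb{P}^2}(1)^{\oplus 3} \to \mathcal{E} \to 0.
\]
Thus every rank $2$ Ulrich bundle on $(\mathbb{P}^2, 2H)$ is the cokernel of such a map $\phi$. Since $\mathrm{Hom}(\mathcal{O}_{\mathbb{P}^2}, \mathcal{O}_{\mathbb{P}^2}(1)^{\oplus 3}) = H^0(\mathcal{O}_{\mathbb{P}^2}(1))^{\oplus 3}$, the map $\phi$ is recorded by a triple of linear forms $\ell_1,\ell_2,\ell_3$, equivalently by a $3\times 3$ scalar matrix $A$. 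First I would observe that because $\mathcal{E}$ is locally free and the sequence is exact, $\phi$ is a subbundle inclusion; hence the $\ell_i$ have no common zero on $\mathbb{P}^2$, which happens precisely when $A \in GL_3$.

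For uniqueness, recall that the minimal free resolution of $\mathcal{E}$ is unique up to isomorphism, so any isomorphism $\mathcal{E} \cong \mathcal{E}'$ lifts to a commutative ladder between the two resolutions. Such a lift is a pair $(\alpha, \beta)$ with $\alpha \in \mathrm{Aut}(\mathcal{O}_{\mathbb{P}^2}) = \mathbb{C}^*$ and $\beta \in \mathrm{Aut}(\mathcal{O}_{\mathbb{P}^2}(1)^{\oplus 3}) = GL_3$, and in matrix terms it replaces $A$ by $\lambda^{-1} G A$. Thus $\mathcal{E} \cong \mathcal{E}'$ if and only if the associated matrices lie in the same orbit under $A \mapsto G A$, $G \in GL_3$ (the scalar $\lambda$ being absorbed into $G$). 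Since $GL_3$ acts transitively on itself by left multiplication and every admissible $A$ is invertible, all these cokernels are mutually isomorphic, which yields uniqueness.

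For existence, I would exhibit one such bundle and verify it is Ulrich. Taking $A$ to be the identity makes the sequence the Euler sequence, so $\mathcal{E}_0 = T_{\mathbb{P}^2}$, a genuine rank $2$ vector bundle. To see it is Ulrich, I would apply Theorem~\ref{thm:ulrich_from_map}: here $\alpha = \lceil 4/2 \rceil = 2$, so the only hypothesis to check is $H^1(\mathcal{E}_0(-4)) = 0$. Twisting the resolution by $-4$ and passing to cohomology, this group is the kernel of the map $H^2(\mathcal{O}_{\mathbb{P}^2}(-4)) \to H^2(\mathcal{O}_{\mathbb{P}^2}(-3)^{\oplus 3})$ induced by $\phi$, which by Serre duality is dual to the multiplication map $H^0(\mathcal{O}_{\mathbb{P}^2})^{\oplus 3} \to H^0(\mathcal{O}_{\mathbb{P}^2}(1))$ sending $(c_i)$ to $\sum c_i \ell_i$. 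As the $\ell_i$ span $H^0(\mathcal{O}_{\mathbb{P}^2}(1))$ (since $A$ is invertible), this map is surjective, its dual is injective, and the kernel vanishes.

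The main obstacle I anticipate is not a hard estimate but the careful bookkeeping in the uniqueness step: identifying $\mathrm{Hom}$ and $\mathrm{Aut}$ of the resolution terms with the appropriate matrix spaces, justifying that an isomorphism of cokernels lifts to the minimal resolutions, and confirming that local freeness forces $A \in GL_3$ rather than merely $A \neq 0$. Once these identifications are in place, the transitivity of left multiplication by $GL_3$ makes uniqueness almost immediate, and existence reduces to the single cohomology computation above.
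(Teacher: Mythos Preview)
Your proof is correct and takes a genuinely different route from the paper's. The paper argues moduli-theoretically: the injective maps $\mathcal{O}_{\mathbb{P}^2}\to\mathcal{O}_{\mathbb{P}^2}(1)^{\oplus 3}$ with locally free cokernel form a nonempty open (hence irreducible) subset of the $\mathrm{Hom}$-space, so the family of rank~$2$ Ulrich bundles is irreducible; then Lemma~\ref{lemma:h_1_e_0} gives $h^1(\mathcal{E}\otimes\mathcal{E}^{\vee})=0$, which forces this irreducible moduli space to be a single reduced point.

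You instead identify the admissible maps $\phi$ with $GL_3$ and show that $\mathrm{Aut}(\mathcal{O})\times\mathrm{Aut}(\mathcal{O}(1)^{\oplus 3})$ acts transitively on them, obtaining uniqueness by a direct orbit computation. This is more elementary---it avoids both the deformation-theoretic input of Lemma~\ref{lemma:h_1_e_0} and any appeal to irreducibility---and it produces the identification $\mathcal{E}_0\cong T_{\mathbb{P}^2}$ as part of the argument rather than as a remark afterward. The paper's approach, on the other hand, is the one that scales: for $d\geq 3$ the analogous orbit space is no longer a single point, whereas the computation of $h^1(\mathcal{E}\otimes\mathcal{E}^{\vee})$ is precisely what feeds into the Casanellas--Hartshorne dimension counts in Theorems~\ref{thm:d_even} and~\ref{thm:d_odd}.

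One point worth making precise in your write-up: when you lift an isomorphism $\mathcal{E}\cong\mathcal{E}'$ to the resolutions, the existence of a lift $\beta\in\mathrm{End}(\mathcal{O}(1)^{\oplus 3})=M_3(k)$ follows from $\mathrm{Ext}^1(\mathcal{O}(1)^{\oplus 3},\mathcal{O})=0$, but its invertibility needs a word. Twisting by $-1$, the surjection $\mathcal{O}^{\oplus 3}\to\mathcal{E}(-1)$ induces an isomorphism on $H^0$ (both sides are $3$-dimensional, by Lemma~\ref{lemma:various_vanishings}), and $H^0(\beta(-1))$ is simply the matrix $\beta$; so $\beta\in GL_3$.
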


\begin{proof}
By Corollary \ref{cor:short_exact_sequence}, an Ulrich bundle $\mathcal{E}$ of rank 2 on $(\mathbb{P}^2, 2H)$ fits into a short exact sequence
\[
 0 \to \mathcal{O}_{\mathbb{P}^2} \to \mathcal{O}_{\mathbb{P}^2}^{3}(1) \to \mathcal{E} \to 0.
\]
Now consider the vector space $V=Hom(\mathcal{O}_{\mathbb{P}^2}, \mathcal{O}_{\mathbb{P}^2}^{3}(1))$. The injective morphisms in $V$ with locally free cokernel form a nonempty and open, hence irreducible, subset by \cite{cB1991}*{Section 4.1}. This means that the moduli space of Ulrich bundles of rank 2 on $(\mathbb{P}^2, 2H)$ is irreducible. Now, Lemma \ref{lemma:h_1_e_0} immediately implies that this moduli space consists of a single point.
\end{proof}
\begin{remark}
By \cite{ESW2003}*{Proposition 5.9}, the tangent bundle $T$ on $(\mathbb{P}^2, 2H)$ is an Ulrich bundle. This theorem implies that it is the \emph{unique} one.
\end{remark}

\begin{theorem}
There is a unique Ulrich bundle of rank $2k$ on $(\mathbb{P}^2, 2H)$; and it is $\mathcal{E}_0^{\oplus k}$.
\end{theorem}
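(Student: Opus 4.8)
The plan is to reduce everything to the rank-$2$ case already settled in Theorem \ref{thm:unique_rank_2_on_degree_2}, in two stages: first show that the only stable Ulrich bundle on $(\mathbb{P}^2,2H)$ is $\mathcal{E}_0$, and then split the Jordan--H\"older filtration of an arbitrary Ulrich bundle using the vanishing $\mathrm{Ext}^1(\mathcal{E}_0,\mathcal{E}_0)=0$.

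\textbf{Step 1: the stable building blocks.} Let $\mathcal{F}$ be a stable Ulrich bundle of rank $r$ on $(\mathbb{P}^2,2H)$. Since $\mathcal{F}$ is stable it is simple, so Corollary \ref{cor:h1_simple} applies and gives $h^1(\mathcal{F}\otimes\mathcal{F}^{\vee})=\frac{1}{4}(4+r^2(d^2-5))$. Specializing to $d=2$ yields $h^1(\mathcal{F}\otimes\mathcal{F}^{\vee})=\frac{1}{4}(4-r^2)$. As this is the dimension of a cohomology group it cannot be negative, which forces $r^2\leq 4$, i.e. $r\leq 2$. Because there are no Ulrich line bundles for $d=2$ by Proposition \ref{prop:ulrich_line_bundles}, we must have $r=2$, and then the uniqueness statement of Theorem \ref{thm:unique_rank_2_on_degree_2} gives $\mathcal{F}\cong\mathcal{E}_0$.

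\textbf{Step 2: assembling an arbitrary Ulrich bundle.} Let $\mathcal{E}$ be an Ulrich bundle of rank $2k$. By Lemma \ref{lem:ulrich_stability} it is a successive extension of its Jordan--H\"older factors, each of which is a stable Ulrich bundle and hence, by Step 1, isomorphic to $\mathcal{E}_0$. Thus there is a filtration $0=\mathcal{E}_{(0)}\subset\mathcal{E}_{(1)}\subset\cdots\subset\mathcal{E}_{(k)}=\mathcal{E}$ with $\mathcal{E}_{(i)}/\mathcal{E}_{(i-1)}\cong\mathcal{E}_0$ for every $i$. I would then prove $\mathcal{E}_{(i)}\cong\mathcal{E}_0^{\oplus i}$ by induction on $i$: the extension $0\to\mathcal{E}_{(i-1)}\to\mathcal{E}_{(i)}\to\mathcal{E}_0\to 0$ has class in $\mathrm{Ext}^1(\mathcal{E}_0,\mathcal{E}_{(i-1)})\cong\mathrm{Ext}^1(\mathcal{E}_0,\mathcal{E}_0)^{\oplus(i-1)}\cong H^1(\mathcal{E}_0\otimes\mathcal{E}_0^{\vee})^{\oplus(i-1)}$, which vanishes by Lemma \ref{lemma:h_1_e_0}. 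Hence each extension splits, giving $\mathcal{E}\cong\mathcal{E}_0^{\oplus k}$; uniqueness of $\mathcal{E}_0$ then yields uniqueness of the rank $2k$ bundle.

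The only substantive point is Step 1: the observation that $h^1(\mathcal{F}\otimes\mathcal{F}^{\vee})$ is a nonnegative integer, combined with the degree-$2$ specialization of Corollary \ref{cor:h1_simple}, is exactly what pins the rank of a stable Ulrich bundle to $2$. Once that is established, Step 2 is purely formal, driven entirely by $\mathrm{Ext}^1(\mathcal{E}_0,\mathcal{E}_0)=0$; the only things to be careful about there are the bookkeeping of the Jordan--H\"older filtration and the standard identification of $\mathrm{Ext}^1$ of locally free sheaves with $H^1$ of the corresponding sheaf $\mathcal{H}om$.
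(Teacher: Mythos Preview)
Your proof is correct and rests on the same two ingredients as the paper's: the Euler-characteristic computation for $\mathcal{E}nd$ of an Ulrich bundle, and the vanishing $h^1(\mathcal{E}_0\otimes\mathcal{E}_0^{\vee})=0$ from Lemma~\ref{lemma:h_1_e_0}. The organization differs slightly. The paper argues by induction on $k$: for a rank-$2k$ Ulrich bundle $\mathcal{F}$ with $k\geq 2$ it computes $\chi(\mathcal{F}\otimes\mathcal{F}^{\vee})=k^2$ directly, deduces $h^0(\mathcal{F}\otimes\mathcal{F}^{\vee})\geq k^2\geq 4$, concludes $\mathcal{F}$ is not simple, writes it as an extension of lower-rank Ulrich bundles, and splits. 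You instead front-load the stability analysis, using Corollary~\ref{cor:h1_simple} and the nonnegativity of $h^1$ to pin the rank of any stable Ulrich bundle to $2$ in one stroke, and then invoke the Jordan--H\"older statement of Lemma~\ref{lem:ulrich_stability} globally rather than unwinding it step by step. The two arguments are logically equivalent (your Step~1 is the contrapositive of the paper's ``not simple'' claim), but your packaging is a bit cleaner.
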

\begin{proof}
We use induction on $k$, the case $k=1$ being evident from Theorem \ref{thm:unique_rank_2_on_degree_2}.

Suppose now that $\mathcal{F}$ is an Ulrich bundle of rank $2k$ on $(\mathbb{P}^2, 2H)$ with $k \geq 2$; and suppose that the statement is proved for ranks smaller than $2k$. By Proposition \ref{prop:c_1}, we have $c_1(\mathcal{F})=3k$. Using \cite{CHGS2012}*{Proposition 2.12}, we obtain $\chi(\mathcal{F} \otimes \mathcal{F}^{\vee})=k^2$. Now, by Lemma \ref{lemma:h2_vanishing}, $h^2(\mathcal{F} \otimes \mathcal{F}^{\vee})=0$. Hence, $h^0(\mathcal{F} \otimes \mathcal{F}^{\vee}) - h^1(\mathcal{F} \otimes \mathcal{F}^{\vee}) = k^2$, which implies that $h^0(\mathcal{F} \otimes \mathcal{F}^{\vee}) \geq 4$. We conclude that $\mathcal{F}$ is not stable, and therefore it can be written as an extension of Ulrich bundles of lower rank
\[
 0 \to \mathcal{F}^{\prime} \to \mathcal{F} \to \mathcal{F}^{\prime\prime} \to 0
\]
by \cite{CKM2012}*{Lemma 2.15}\footnote{See footnote above.}.

Now since both $\mathcal{F}^{\prime}$ and $\mathcal{F}^{\prime\prime}$ are direct sums of copies of $\mathcal{E}_0$, and $h^1(\mathcal{E}_0 \otimes \mathcal{E}_0^{\vee})=0$ by Lemma \ref{lemma:h_1_e_0}, it follows that the short exact sequence above is split. Hence, $\mathcal{F}$ is a direct sum of copies of $\mathcal{E}_0$.
\end{proof}

\section{The Case $d \geq 3$}
In this section, we discuss existence of stable Ulrich bundles of any given rank $r \geq 2$ on $(\mathbb{P}^2,dH)$, subject to the restriction that $r$ must be even whenever $d$ is. The existence of rank 2 Ulrich bundles on $(\mathbb{P}^2,dH)$ was proved in the work of Eisenbud, Schreyer and Weyman (\cite{ESW2003}); note that these are automatically stable by Proposition \ref{prop:ulrich_line_bundles}. Using a method due to Casanellas and Hartshorne (\cite{CHGS2012}), we prove that for even $d$, there exist stable Ulrich bundles of all even ranks on $(\mathbb{P}^2,dH)$. This method uses consecutive extensions of Ulrich bundles of ranks 2, or ranks 2 and 3, and counting deformations to ensure that there are more simple Ulrich bundles than strictly semistable ones, hence ensuring that there exist \emph{stable} Ulrich bundles.

The existence of rank 3 Ulrich bundles on $(\mathbb{P}^2,dH)$ whenever $d$ is odd is a more difficult problem. (Again, note that rank 3 Ulrich bundles are necessarily stable.) We prove that, given the existence of rank 3 Ulrich bundles on $(\mathbb{P}^2,dH)$ with $d$ odd, there exist Ulrich bundles of all ranks $r \geq 2$ on $X$. In Section \ref{sec:appendix}, we include a \textit{Macaulay2} program to check existence of rank 3 Ulrich bundles; we are able to prove that Ulrich bundles of rank 3 exist for all odd degrees up to 43.

\begin{theorem}\label{thm:d_even}
Suppose that $d$ is even. Then there exist stable Ulrich bundles of rank $r=2k$ on $(\mathbb{P}^2,dH)$ for $k \geq 1$.
\end{theorem}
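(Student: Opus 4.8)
The plan is to argue by induction on $k$, following the deformation-theoretic strategy of Casanellas and Hartshorne \cite{CHGS2012}: I will exhibit a nonempty smooth family of \emph{simple} Ulrich bundles of rank $2k$ and then show that the strictly semistable bundles inside it sweep out a locus of strictly smaller dimension, so that the general member is stable. The base case $k=1$ is the existence of a rank $2$ Ulrich bundle, which holds by \cite{ESW2003} and is automatically stable by Proposition \ref{prop:ulrich_line_bundles} (there are no Ulrich line bundles for $d \geq 2$). Assume therefore $k \geq 2$ and that a stable Ulrich bundle of rank $2(k-1)$ exists. Since $d$ is even and $d\geq 3$, we have $d\geq 4$, so $d^2-5\geq 11>0$ throughout.

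I would first record the numerical inputs. The resolution of Corollary \ref{cor:short_exact_sequence} depends only on $r$ and $d$, so the Chern character of a rank $r$ Ulrich bundle is determined by $r$; a direct computation shows it is in fact \emph{proportional} to $r$, i.e. $\mathrm{ch}(\mathcal{E})=\tfrac{r}{2}\,\mathrm{ch}(\mathcal{E}_0)$ for $\mathcal{E}_0$ a rank $2$ Ulrich bundle. Hence, for Ulrich bundles $\mathcal{A},\mathcal{B}$ of ranks $2a,2b$, the Euler pairing is
\[
\chi(\mathcal{A}\otimes\mathcal{B}^{\vee}) = \frac{(2a)(2b)}{4}\,\chi(\mathcal{E}_0\otimes\mathcal{E}_0^{\vee}) = -ab(d^2-5),
\]
where $\chi(\mathcal{E}_0\otimes\mathcal{E}_0^{\vee})=-(d^2-5)$ follows from Corollary \ref{cor:h1_simple}. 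Since $h^2(\mathcal{A}\otimes\mathcal{B}^{\vee})=0$ by Lemma \ref{lemma:h2_vanishing} and $\mathrm{Hom}(\mathcal{B},\mathcal{A})=0$ for non-isomorphic stable bundles of the same slope, this gives $\mathrm{ext}^1(\mathcal{B},\mathcal{A})=ab(d^2-5)>0$. In particular the moduli space $M_{2a}$ of simple Ulrich bundles of rank $2a$ is smooth (as $\mathrm{Ext}^2$ vanishes) of dimension $h^1(\mathcal{E}\otimes\mathcal{E}^{\vee})=1+a^2(d^2-5)$ by Corollary \ref{cor:h1_simple}.

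Next I would produce a simple Ulrich bundle of rank $2k$. Take $\mathcal{A}$ stable Ulrich of rank $2$ and $\mathcal{B}$ stable Ulrich of rank $2(k-1)$, and form a \emph{nontrivial} extension
\[
0 \to \mathcal{A} \to \mathcal{E} \to \mathcal{B} \to 0,
\]
which exists because $\mathrm{ext}^1(\mathcal{B},\mathcal{A})=(k-1)(d^2-5)>0$. Additivity of the Hilbert polynomial and the vanishing $H^1(\mathcal{A}(td))=H^1(\mathcal{B}(td))=0$ in the long exact sequence show that $\mathcal{E}$ is again Ulrich. Since the ranks differ we have $\mathcal{A}\not\cong\mathcal{B}$, so $\mathrm{Hom}(\mathcal{A},\mathcal{B})=\mathrm{Hom}(\mathcal{B},\mathcal{A})=0$; a standard diagram chase then shows the connecting maps send $\mathrm{id}_{\mathcal{A}}$ and $\mathrm{id}_{\mathcal{B}}$ to the (nonzero) extension class, forcing $\mathrm{hom}(\mathcal{E},\mathcal{E})=1$. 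Thus $\mathcal{E}$ is simple, so $M_{2k}$ is nonempty and smooth of dimension $1+k^2(d^2-5)$.

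Finally I would bound the strictly semistable locus. By Lemma \ref{lem:ulrich_stability}, any strictly semistable Ulrich bundle of rank $2k$ admits a stable Ulrich subbundle $\mathcal{A}$ of rank $2a$ with Ulrich quotient $\mathcal{B}$ of rank $2b$, $a+b=k$. Such bundles are swept out by their extension data, so this stratum has dimension at most
\[
\dim M_{2a}+\dim M_{2b}+\mathrm{ext}^1(\mathcal{B},\mathcal{A})-1 = 1+(a^2+ab+b^2)(d^2-5)+\mathrm{hom}(\mathcal{B},\mathcal{A}),
\]
using $\mathrm{ext}^1(\mathcal{B},\mathcal{A})=\mathrm{hom}(\mathcal{B},\mathcal{A})+ab(d^2-5)$. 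Comparing with $\dim M_{2k}=1+(a+b)^2(d^2-5)$, the difference is $ab(d^2-5)-\mathrm{hom}(\mathcal{B},\mathcal{A})$, which is strictly positive because $\mathrm{hom}(\mathcal{B},\mathcal{A})$ is bounded by the number of Jordan–Hölder factors of $\mathcal{B}$ isomorphic to $\mathcal{A}$, while $ab(d^2-5)\geq 11$. Hence every such stratum has dimension strictly less than $\dim M_{2k}$, so the general point of $M_{2k}$ is stable, completing the induction. The main obstacle is precisely this last estimate: one must make the parametrization of strictly semistable bundles rigorous (via relative $\mathrm{Ext}$ and flattening stratifications) and verify the inequality $\dim(\text{strictly ss}) < \dim M_{2k}$ uniformly over all Jordan–Hölder types, including the correction $\mathrm{hom}(\mathcal{B},\mathcal{A})$ arising from repeated stable factors; the large value $d^2-5\geq 11$ is what gives the needed margin.
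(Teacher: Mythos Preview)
Your proposal is correct and follows the same Casanellas--Hartshorne strategy as the paper: induct on $k$, build simple rank-$2k$ Ulrich bundles as nontrivial extensions of a stable rank-$2$ by a stable rank-$2(k-1)$ Ulrich bundle (simplicity via \cite{CHGS2012}*{Lemma 4.2} in the paper, via your diagram chase), then show the strictly semistable locus has strictly smaller dimension than $1+k^2(d^2-5)$.

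Two small points of comparison. First, your sentence ``since the ranks differ we have $\mathcal{A}\not\cong\mathcal{B}$'' is false precisely when $k=2$, where both bundles have rank $2$; the paper treats this case separately by observing that $h^1(\mathcal{E}\otimes\mathcal{E}^{\vee})=d^2-4>0$ for rank-$2$ Ulrich bundles, so the rank-$2$ moduli is positive-dimensional and one may choose $\mathcal{A}\not\cong\mathcal{B}$. Second, for the semistable bound the paper does not stratify over all $a+b=k$ as you do; instead it invokes the reduction explained under \cite{CHGS2012}*{Remark 4.6}, which says it is enough to bound the single stratum of extensions of a stable rank-$2$ by a stable rank-$2(k-1)$ bundle, and then checks the explicit inequality
\[
(d^2-4)+\bigl((k-1)^2(d^2-5)+1\bigr)+\bigl((k-1)(d^2-5)-1\bigr)<k^2(d^2-5)+1.
\]
This sidesteps your concerns about parametrizing non-simple quotients $\mathcal{B}$ and the correction term $\mathrm{hom}(\mathcal{B},\mathcal{A})$. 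Your more general stratum count also works (since $ab(d^2-5)\geq 11(k-1)$ comfortably dominates $\mathrm{hom}(\mathcal{B},\mathcal{A})\leq b$), but the paper's route is shorter.
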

\begin{proof}
We use induction on $k$. The case $k=1$ follows from \cite{ESW2003}*{Proposition 5.9}. In this case, the dimension of first-order deformations of a rank 2 Ulrich bundle $\mathcal{E}$ is given by $h^1(\mathcal{E}^{\vee} \otimes \mathcal{E})$. Note first that $\mathcal{E}$ is stable, hence simple. Therefore, by Corollary \ref{cor:h1_simple} $h^1(\mathcal{E}^{\vee} \otimes \mathcal{E})=d^2-4$. Since $h^1(\mathcal{E}^{\vee} \otimes \mathcal{E}) > 0$ and $h^2(\mathcal{E}^{\vee} \otimes \mathcal{E}) = 0$, we conclude that there are more than one nonisomorphic rank 2 Ulrich bundle on $(\mathbb{P}^2,dH)$.

Now assume that the theorem is proved up to rank $2k-2$. Let $r=2k$ with $k \geq 2$. Choose an Ulrich bundle $\mathcal{E}_1$ of rank 2 and another stable Ulrich bundle $\mathcal{E}_2$ of rank $2k-2$ on $(\mathbb{P}^2,dH)$. Since $\mathcal{E}_1$ and $\mathcal{E}_2$ are necessarily non-isomorphic, $h^0(\mathcal{E}_1^{\vee} \otimes \mathcal{E}_2)=h^0(\mathcal{E}_1, \mathcal{E}_2)=0$ by \cite{CKM2013}*{Proposition 2.3} and \cite{HL2010}*{Proposition 1.2.7}. (In the case $k=2$, we must make sure that $\mathcal{E}_1$ and $\mathcal{E}_2$ are not isomorphic. This can be done in view of the fact that the first-order deformations of $\mathcal{E}_1$ have dimension $d^2-4 > 0$.) Similarly, we have $h^2(\mathcal{E}_1^{\vee} \otimes \mathcal{E}_2)=0$. Therefore, we have
\begin{align*}
 h^1(\mathcal{E}_1^{\vee} \otimes \mathcal{E}_2) &= - \chi(\mathcal{E}_1^{\vee} \otimes \mathcal{E}_2) \\
 &= (k-1)(d^2-5)
\end{align*}
as before. Since $h^1(\mathcal{E}_1^{\vee} \otimes \mathcal{E}_2) > 0$, there exist nonsplit extensions of $\mathcal{E}_1$ by $\mathcal{E}_2$; and these are simple by \cite{CHGS2012}*{Lemma 4.2}. Hence, there exist \emph{simple} Ulrich bundles of rank $r=2k$ on $(\mathbb{P}^2,dH)$. As above, we can compute the dimension of first-order deformations of such an Ulrich bundle $\mathcal{E}$; we obtain $h^1(\mathcal{E}^{\vee} \otimes \mathcal{E})=k^2(d^2-5)+1$ by Corollary \ref{cor:h1_simple}.

We must now compute an upper bound for the dimension of \emph{strictly semistable} Ulrich bundles of rank $2k$ and prove that it is smaller than $k^2(d^2-5)+1$. Following the explanation under \cite{CHGS2012}*{Remark 4.6}, it is enough to find an upper bound for the dimension of Ulrich bundles of rank $2k$ that are obtained as extensions of rank 2 and rank $2k-2$ stable Ulrich bundles. These have moduli spaces of dimensions $d^2-4$ and $(k-1)^2(d^2-5)+1$ respectively by Corollary \ref{cor:h1_simple}. And the extension space of such bundles has dimension $(k-1)(d^2-5)-1$. It is now trivial to verify that
\[
 (d^2-4) + ((k-1)^2(d^2-5)+1) + (k-1)(d^2-5) -1 < k^2(d^2-5)+1.
\]
\end{proof}

\begin{theorem}\label{thm:d_odd}
Suppose that $d$ is odd, and there exists a rank 3 Ulrich bundle on $(\mathbb{P}^2,dH)$. Then there exist stable Ulrich bundles of any rank $r \geq 2$ on $(\mathbb{P}^2,dH)$.
\end{theorem}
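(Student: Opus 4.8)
The plan is to induct on the rank $r$, imitating the strategy of Theorem \ref{thm:d_even} but using \emph{both} rank $2$ and rank $3$ Ulrich bundles as seeds. First I would record the two base cases. Rank $2$ Ulrich bundles exist by \cite{ESW2003}*{Proposition 5.9} and are stable by Proposition \ref{prop:ulrich_line_bundles}; rank $3$ Ulrich bundles exist by hypothesis and are necessarily stable, since a Jordan--H\"older factor of smaller rank would force the existence of a rank $1$ Ulrich bundle, which Proposition \ref{prop:ulrich_line_bundles} forbids. By Corollary \ref{cor:h1_simple}, a simple Ulrich bundle of rank $\rho$ satisfies $h^1(\mathcal{E}\otimes\mathcal{E}^{\vee}) = 1 + \tfrac{1}{4}\rho^2(d^2-5)$, which I will abbreviate as $D(\rho)$. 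Because $d$ is odd we have $d^2-5 \equiv 4 \pmod 8$, so $\tfrac{1}{4}(d^2-5)$ is an integer and $D(\rho)$ is a positive integer for every $\rho$; in particular $D(2)=d^2-4>0$ and $D(3)>0$, so each seed already moves in a positive-dimensional family. Since $h^2(\mathcal{E}\otimes\mathcal{E}^{\vee})=0$ by Lemma \ref{lemma:h2_vanishing}, the moduli space of Ulrich bundles is smooth of dimension $D(\rho)$ at every simple point.

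For the inductive step, fix $r \geq 4$ and assume that stable Ulrich bundles of every rank $2 \leq \rho < r$ exist. Note that the single recursion $r = 2 + (r-2)$ reaches all $r \geq 4$ from the two seeds, with the rank $3$ seed being precisely what produces the odd ranks. I would choose a rank $2$ stable Ulrich bundle $\mathcal{E}_1$ and a rank $r-2$ stable Ulrich bundle $\mathcal{E}_2$. For $r \geq 5$ these have different ranks and are automatically non-isomorphic; for $r = 4$ one picks two distinct rank $2$ bundles, which is possible because $D(2) > 0$. As non-isomorphic stable bundles with equal reduced Hilbert polynomial they satisfy $\operatorname{Hom}(\mathcal{E}_1,\mathcal{E}_2) = \operatorname{Hom}(\mathcal{E}_2,\mathcal{E}_1) = 0$ by \cite{HL2010}*{Proposition 1.2.7}, and $h^2(\mathcal{E}_1^{\vee}\otimes\mathcal{E}_2)=0$ by Lemma \ref{lemma:h2_vanishing}. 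Hence $h^1(\mathcal{E}_1^{\vee}\otimes\mathcal{E}_2) = -\chi(\mathcal{E}_1^{\vee}\otimes\mathcal{E}_2) = \tfrac{1}{2}(r-2)(d^2-5) > 0$, so nonsplit extensions
\[
 0 \to \mathcal{E}_2 \to \mathcal{E} \to \mathcal{E}_1 \to 0
\]
exist. Any such $\mathcal{E}$ is again Ulrich, since the Hilbert polynomials add to $d^2 r\binom{t+2}{2}$ and the ACM vanishing propagates through the long exact sequence, and $\mathcal{E}$ is simple by \cite{CHGS2012}*{Lemma 4.2}. Thus the simple locus in rank $r$ is nonempty, and by the smoothness recorded above it has dimension $D(r) = 1 + \tfrac{1}{4}r^2(d^2-5)$.

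The decisive step is to bound the locus of \emph{strictly semistable} Ulrich bundles of rank $r$ and show it has dimension strictly below $D(r)$. Following \cite{CHGS2012}*{Remark 4.6}, every such bundle fits into an extension $0 \to \mathcal{A} \to \mathcal{F} \to \mathcal{B} \to 0$ with $\mathcal{A}$ a stable Ulrich bundle of some rank $s$ and $\mathcal{B}$ an Ulrich bundle of rank $r-s$, and the family of such $\mathcal{F}$ has dimension at most
\[
 D(s) + D(r-s) + \bigl(\dim\operatorname{Ext}^1(\mathcal{B},\mathcal{A}) - 1\bigr),
\]
where $\dim\operatorname{Ext}^1(\mathcal{B},\mathcal{A}) = \tfrac{1}{4}s(r-s)(d^2-5)$. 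Substituting $D(\rho) = 1 + \tfrac{1}{4}\rho^2(d^2-5)$ and simplifying, this upper bound equals $D(r) - \tfrac{1}{4}s(r-s)(d^2-5)$, which is strictly smaller than $D(r)$ for all $1 \leq s \leq r-1$ since $d^2-5 > 0$. Therefore the $D(r)$-dimensional simple locus cannot lie inside the strictly semistable locus, so there exists a simple, semistable (Lemma \ref{lem:ulrich_stability}), and not strictly semistable --- hence \emph{stable} --- Ulrich bundle of rank $r$, which closes the induction.

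I expect the main obstacle to be this last dimension estimate, and specifically the bookkeeping behind \cite{CHGS2012}*{Remark 4.6}: one must verify that Jordan--H\"older strata with repeated stable factors (where $\operatorname{Hom}(\mathcal{B},\mathcal{A}) \neq 0$ would inflate $\operatorname{Ext}^1$) do not violate the bound, and that it truly suffices to treat a two-step extension with $\mathcal{A}$ stable and $\mathcal{B}$ merely Ulrich of smaller rank. What makes the argument go through uniformly is the clean identity $D(r) - \bigl[D(s)+D(r-s)+\dim\operatorname{Ext}^1-1\bigr] = \tfrac{1}{4}s(r-s)(d^2-5)$, which holds for every splitting $r = s + (r-s)$ irrespective of parity; this is exactly why the same numerology that governed the even case in Theorem \ref{thm:d_even} also settles the odd case once the rank $3$ seed is available.
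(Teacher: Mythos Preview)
Your argument is correct and follows the same overall strategy as the paper: induct on $r$ with base cases $r=2,3$, construct simple Ulrich bundles as nonsplit extensions of lower-rank stable Ulrich bundles, compute the dimension of the simple locus via Corollary~\ref{cor:h1_simple}, and invoke the Casanellas--Hartshorne dimension count to separate the simple locus from the strictly semistable one.

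The difference is organizational. The paper splits into two cases according to the parity of $r$: for $r=2k$ it builds simple bundles via $(2,\,r-2)$ extensions and bounds the strictly semistable locus by checking the two specific splittings $(2,\,r-2)$ and $(3,\,r-3)$ separately; for $r=2k+1$ it builds via $(3,\,r-3)$ and checks only that splitting. You instead use the single construction $(2,\,r-2)$ for every $r\geq 4$ (legitimate since both seeds $r=2,3$ are available) and prove the identity
\[
D(r)\;-\;\bigl[D(s)+D(r-s)+\dim\operatorname{Ext}^1-1\bigr]\;=\;\tfrac{1}{4}\,s(r-s)(d^2-5),
\]
which disposes of \emph{every} splitting $r=s+(r-s)$ at once. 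This is a tidier repackaging of the same numerics---the paper's case-by-case inequalities are special instances of your identity---and it makes transparent that the odd-$d$ case is numerically no different from Theorem~\ref{thm:d_even} once the rank~$3$ seed is supplied. The caveat you flag about the bookkeeping behind \cite{CHGS2012}*{Remark 4.6} (repeated Jordan--H\"older factors, non-stable quotients~$\mathcal{B}$) is shared equally by the paper's proof, which invokes the same reference for the same purpose.
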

\begin{proof}
The proof of this theorem follows the same outline as the proof of Theorem \ref{thm:d_even}, so we omit many of the details.

Again we use induction on the rank $r$. The case $r=2$ follows from \cite{ESW2003}*{Proposition 5.9} and the case $r=3$ is included in the hypothesis. Suppose now that $r \geq 4$. We distinguish two cases.

\textit{Case 1: $r=2k$, $k \geq 2$.} In this case, it can be proved that there exist \emph{simple} Ulrich bundles of rank $r$ by the argument in the proof of Theorem \ref{thm:d_even}; and the dimension of the first-order deformations of such an Ulrich bundle is $k^2(d^2-5)+1$ by Corollary \ref{cor:h1_simple}.

To prove the existence of \emph{stable} Ulrich bundles of rank $r$, we must compute the dimension of \emph{strictly semistable} Ulrich bundles of rank $r$ and prove that it is smaller than $k^2(d^2-5)+1$. We consider only extensions of rank 2 and rank $r-2$ stable Ulrich bundles, or extensions of rank 3 and rank $r-3$ stable Ulrich bundles.

For a rank 2 Ulrich bundle $\mathcal{E}$ and a rank $r-2$ stable Ulrich bundle $\mathcal{F}$, we have
\[
 h^1(\mathcal{E}^{\vee} \otimes \mathcal{F}) = (k-1)(d^2-5).
\]
For a rank 3 Ulrich bundle $\mathcal{E}$ and a rank $r-3$ stable Ulrich bundle $\mathcal{F}$, we have
\[
 h^1(\mathcal{E}^{\vee} \otimes \mathcal{F}) = \frac{3}{4}(r-3)(d^2-5).
\]
(In this last case, we must make sure that $\mathcal{E}$ and $\mathcal{F}$ are not isomorphic if $r=6$. This can be done in view of the fact that the first-order deformations of $\mathcal{E}$ have dimension $1+\frac{9}{4}(d^2-5) > 0$.)

Both these numbers are positive, hence there exist nonsplit extensions of $\mathcal{E}$ by $\mathcal{F}$, which are simple by \cite{CHGS2012}*{Lemma 4.2}.

Now we must prove that the  the dimension of \emph{strictly semistable} Ulrich bundles of rank $r$ is strictly smaller than the dimension of simple Ulrich bundles of rank $r$, which is $k^2(d^2-5)+1$.

In the case of extensions of rank 2 and rank $r-2$ stable Ulrich bundles, we have to prove that
\[
 (d^2-4) + \frac{1}{4}(4+(2k-2)^2(d^2-5)) + (k-1)(d^2-5) -1 < k^2(d^2-5)+1.
\]
In the case of extensions of rank 3 and rank $r-3$ stable Ulrich bundles, we have to prove that
\[
 1 + \frac{9}{4} (d^2-5) + 1 + \frac{(r-3)^2}{4}(d^2-5) + \frac{3}{4}(r-3)(d^2-5) -1 < k^2(d^2-5) +1.
\]
It is a straightforward exercise to verify both inequalities.

\textit{Case 2: $r=2k+1$, $k \geq 2$.} The proof is almost identical to the proof of the second half of case 1 above. For an Ulrich bundle $\mathcal{E}$ of rank 3 and a stable Ulrich bundle $\mathcal{F}$ of rank $r-3$, we have
\[
 h^1(\mathcal{E}^{\vee} \otimes \mathcal{F}) = \frac{3}{4}(r-3)(d^2-5).
\]
Proving the existence of stable rank $r$ Ulrich bundles now comes down to verifying that
\[
 1 + \frac{9}{4} (d^2-5) + 1 + \frac{(r-3)^2}{4}(d^2-5) + \frac{3}{4}(r-3)(d^2-5) -1 < k^2(d^2-5) +1,
\]
which is again straightforward.
\end{proof}

\appendix
\section{A \textit{Macaulay2} Program To Check Existence Of Rank 3 Ulrich Bundles}\label{sec:appendix}
For a given odd $d$, the following \textit{Macaulay2} code can be used to verify that there exist linear maps
\[
 \mathcal{O}_{\mathbb{P}^2}^{\frac{3}{2}(d-1)}(d-2) \to \mathcal{O}_{\mathbb{P}^2}^{\frac{3}{2}(d+1)}(d-1)
\]
whose cokernels $\mathcal{E}$ satisfy the vanishing conditions $H^1(\mathcal{E}(-2d))=H^1(\mathcal{E}(-3d))=0$. Note that we do not check whether $\mathcal{E}$ is locally free; since the maps whose cokernels are locally free form a dense subset in the corresponding vector space.
\\

\begin{verbatim}
i1 : kk=ZZ/32003; R=kk[x,y,z]; d=7;

i2 : p=3*(d+1)//2; q=3*(d-1)//2; 

i3 : M=random(R^p, R^{q:-1}); E=(sheaf (cokernel M))(d-1);

             12       9
o3 : Matrix R   <--- R

i4 : (HH^1(E(-2*d)), HH^1(E(-3*d)))

o4 = (0, 0)

o4 : Sequence
\end{verbatim}

We have used this code to verify the existence of rank 3 Ulrich bundles on $(\mathbb{P}^2,dH)$ for odd degrees up to $d=43$. Therefore, we can make the following conjecture.
\begin{conjecture}
There exists a rank 3 Ulrich bundle on $(\mathbb{P}^2,dH)$ for all odd degrees $d \geq 3$.
\end{conjecture}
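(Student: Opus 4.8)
The plan is to realize the desired bundle as the cokernel of a \emph{general} matrix of linear forms and to reduce the Ulrich property to two maximal-rank statements. Fix an odd $d \geq 3$, set $p = \frac{3}{2}(d+1)$ and $q = \frac{3}{2}(d-1)$ (both integers since $d$ is odd), and let $S = k[x,y,z]$ with graded pieces $S_m$. A morphism $f \colon \mathcal{O}_{\mathbb{P}^2}^{q}(d-2) \to \mathcal{O}_{\mathbb{P}^2}^{p}(d-1)$ is the same datum as a $p \times q$ matrix $M$ of linear forms, i.e. a point of the irreducible affine space $\mathbb{A}^{3pq}$. Since $p - q = 3$, the degeneracy locus where $M$ drops rank has expected codimension $4 > 2 = \dim \mathbb{P}^2$, so for general $M$ the map $f$ is injective with locally free cokernel $\mathcal{E}$ of rank $3$ (a nonempty open condition, exactly as in the proof of Theorem \ref{thm:unique_rank_2_on_degree_2} via \cite{cB1991}*{Section 4.1}). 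By Theorem \ref{thm:ulrich_from_map} such an $\mathcal{E}$ is Ulrich as soon as $H^1(\mathcal{E}(-2d)) = H^1(\mathcal{E}(-3d)) = 0$, since $\alpha = \lceil 5/2 \rceil = 3$; these are precisely the two groups tested by the \textit{Macaulay2} code above.

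First I would rewrite these two vanishings as linear-algebra conditions on $M$. Twisting the resolution by $-2d$ and by $-3d$ and using that $H^1$ of any line bundle on $\mathbb{P}^2$ vanishes, the long exact sequences identify $H^1(\mathcal{E}(-2d))$ and $H^1(\mathcal{E}(-3d))$ with the kernels of the induced maps on $H^2$. By Serre duality these are the duals of the two multiplication maps given by $M^{\mathsf T}$,
\[
 \mu_A \colon (S_{d-2})^{p} \longrightarrow (S_{d-1})^{q}, \qquad \mu_B \colon (S_{2d-2})^{p} \longrightarrow (S_{2d-1})^{q},
\]
so that $H^1(\mathcal{E}(-2d)) = (\mathrm{coker}\, \mu_A)^{\vee}$ and $H^1(\mathcal{E}(-3d)) = (\mathrm{coker}\, \mu_B)^{\vee}$. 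A direct count gives $\dim (S_{d-2})^{p} = \dim (S_{d-1})^{q} = \frac{3}{4}d(d^2-1)$, so $\mu_A$ is \emph{square} and its surjectivity is the nonvanishing of a single determinant; while $\dim (S_{2d-2})^{p} - \dim (S_{2d-1})^{q} = 3d^2 > 0$, so $\mu_B$ is \emph{overdetermined} and its surjectivity is the expected generic behavior. Both ``$\mu_A$ is an isomorphism'' and ``$\mu_B$ is surjective'' are open conditions on $\mathbb{A}^{3pq}$.

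Because $\mathbb{A}^{3pq}$ is irreducible, any finite family of nonempty open subsets has nonempty intersection; hence it suffices to prove \emph{separately} that each of the three open conditions (injective $f$ with locally free cokernel; $\mu_A$ an isomorphism; $\mu_B$ surjective) is nonempty, and then a general $M$ in the intersection produces a rank $3$ Ulrich bundle. The nonemptiness for $\mu_A$, being an equality of dimensions, I would attack by specializing $M$ to a structured matrix — for instance one whose nonzero entries are monomials arranged so that $\mu_A$ becomes block-triangular or permutation-like on suitable monomial bases — computing that its determinant is nonzero, and then invoking upper semicontinuity of the rank to pass back to general $M$.

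The hard part will be the surjectivity of the overdetermined map $\mu_B$ for some (equivalently, general) $M$. This is an instance of the notoriously delicate problem of determining the cohomology of the cokernel of a general map between sums of line bundles on $\mathbb{P}^2$, of the same flavour as maximal-rank and Fr\"{o}berg-type statements, which is exactly why the result is presently only a conjecture verified by machine up to $d = 43$. The most promising route is a degeneration in the spirit of Hirschowitz's \emph{m\'{e}thode d'Horace}: specialize a suitable subset of the linear entries of $M$ so that part of the multiplication is supported on a line $L \subset \mathbb{P}^2$, split off the contribution of $L$, and reduce the surjectivity of $\mu_B$ in degree $d$ to an analogous but smaller surjectivity, setting up an induction on $d$. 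Making such a specialization both explicit enough to compute and general enough to preserve maximal rank, while simultaneously keeping $\mu_A$ nondegenerate and the cokernel locally free, is the crux; a clean inductive mechanism here would upgrade the conjecture to a theorem for all odd $d$.
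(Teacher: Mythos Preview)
The statement you are addressing is stated in the paper as a \emph{conjecture}, not a theorem: the paper offers no proof whatsoever, only the \textit{Macaulay2} verification for odd $d\le 43$ described in Section~\ref{sec:appendix}. There is therefore no ``paper's own proof'' to compare against; your proposal has to be assessed on its own terms.

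Your reduction is exactly the one implicit in the paper's framework, and it is set up correctly. Via Theorem~\ref{thm:ulrich_from_map} with $r=3$ and $\alpha=3$, the Ulrich property for the cokernel of a general $p\times q$ matrix of linear forms (with $p=\tfrac{3}{2}(d+1)$, $q=\tfrac{3}{2}(d-1)$) reduces to the two vanishings $H^1(\mathcal{E}(-2d))=H^1(\mathcal{E}(-3d))=0$; your Serre-duality translation of these into the surjectivity of the multiplication maps $\mu_A\colon (S_{d-2})^{p}\to (S_{d-1})^{q}$ and $\mu_B\colon (S_{2d-2})^{p}\to (S_{2d-1})^{q}$ is correct, as are your dimension counts showing that $\mu_A$ is square and $\mu_B$ is overdetermined by $3d^2$. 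The reduction to three independent open conditions on the irreducible parameter space $\mathbb{A}^{3pq}$ is also sound.

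That said, what you have written is a strategy, not a proof, and you acknowledge this yourself. Neither of the two substantive steps is carried out: for $\mu_A$ you only gesture at a ``block-triangular or permutation-like'' specialization without exhibiting one, and for $\mu_B$ you propose an Horace-type degeneration and induction on $d$ without executing it, explicitly calling this ``the crux'' that ``would upgrade the conjecture to a theorem.'' This is an honest and accurate diagnosis of where the difficulty lies --- it is precisely why the authors left the statement as a conjecture --- but as it stands your proposal does not prove the conjecture; it reformulates it as a maximal-rank problem and identifies the obstacle.
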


\begin{bibdiv}
\begin{biblist}

\bib{cB1991}{article}{
   author={B{\u{a}}nic{\u{a}}, Constantin},
   title={Smooth reflexive sheaves},
   booktitle={Proceedings of the Colloquium on Complex Analysis and the
   Sixth Romanian-Finnish Seminar},
   journal={Rev. Roumaine Math. Pures Appl.},
   volume={36},
   date={1991},
   number={9-10},
   pages={571--593},
   issn={0035-3965},
   review={\MR{1172165}},
}

\bib{aB2000}{article}{
   author={Beauville, Arnaud},
   title={Determinantal hypersurfaces},
   note={Dedicated to William Fulton on the occasion of his 60th birthday},
   journal={Michigan Math. J.},
   volume={48},
   date={2000},
   pages={39--64},
   issn={0026-2285},
   review={\MR{1786479 (2002b:14060)}},
   doi={10.1307/mmj/1030132707},
}

\bib{CHGS2012}{article}{
   author={Casanellas, Marta},
   author={Hartshorne, Robin},
   author={Geiss, Florian},
   author={Schreyer, Frank-Olaf},
   title={Stable Ulrich bundles},
   journal={Internat. J. Math.},
   volume={23},
   date={2012},
   number={8},
   pages={1250083, 50},
   issn={0129-167X},
   review={\MR{2949221}},
   doi={10.1142/S0129167X12500838},
}

\bib{CKM2012}{article}{
   author={Coskun, Emre},
   author={Kulkarni, Rajesh S.},
   author={Mustopa, Yusuf},
   title={On representations of Clifford algebras of ternary cubic forms},
   conference={
      title={New trends in noncommutative algebra},
   },
   book={
      series={Contemp. Math.},
      volume={562},
      publisher={Amer. Math. Soc., Providence, RI},
   },
   date={2012},
   pages={91--99},
   review={\MR{2905555}},
   doi={10.1090/conm/562/11132},
}


\bib{CKM2013}{article}{
   author={Coskun, Emre},
   author={Kulkarni, Rajesh S.},
   author={Mustopa, Yusuf},
   title={The geometry of Ulrich bundles on del Pezzo surfaces},
   journal={J. Algebra},
   volume={375},
   date={2013},
   pages={280--301},
   issn={0021-8693},
   review={\MR{2998957}},
   doi={10.1016/j.jalgebra.2012.08.032},
}

\bib{ESW2003}{article}{
   author={Eisenbud, David},
   author={Schreyer, Frank-Olaf},
   author={Weyman, Jerzy},
   title={Resultants and Chow forms via exterior syzygies},
   journal={J. Amer. Math. Soc.},
   volume={16},
   date={2003},
   number={3},
   pages={537--579},
   issn={0894-0347},
   review={\MR{1969204}},
   doi={10.1090/S0894-0347-03-00423-5},
}

\bib{ES2011}{article}{
	author={Eisenbud, David},
	author={Schreyer, Frank-Olaf},
	title={Boij-S\"oderberg theory},
	conference={
		title={Combinatorial aspects of commutative algebra and algebraic
			geometry},
	},
	book={
		series={Abel Symp.},
		volume={6},
		publisher={Springer, Berlin},
	},
	date={2011},
	pages={35--48},
	review={\MR{2810424}},
	doi={10.1007/978-3-642-19492-4-3},
}

\bib{FMP2003}{article}{
   author={Farkas, Gavril},
   author={Musta{\c{t}}{\v{a}}, Mircea},
   author={Popa, Mihnea},
   title={Divisors on ${\scr M}_{g,g+1}$ and the minimal resolution
   conjecture for points on canonical curves},
   language={English, with English and French summaries},
   journal={Ann. Sci. \'Ecole Norm. Sup. (4)},
   volume={36},
   date={2003},
   number={4},
   pages={553--581},
   issn={0012-9593},
   review={\MR{2013926 (2005b:14051)}},
   doi={10.1016/S0012-9593(03)00022-3},
}

\bib{wF1984}{book}{
	author={Fulton, William},
	title={Intersection theory},
	series={Ergebnisse der Mathematik und ihrer Grenzgebiete (3) [Results in
		Mathematics and Related Areas (3)]},
	volume={2},
	publisher={Springer-Verlag, Berlin},
	date={1984},
	pages={xi+470},
	isbn={3-540-12176-5},
	review={\MR{732620 (85k:14004)}},
	doi={10.1007/978-3-662-02421-8},
}


\bib{HL2010}{book}{
   author={Huybrechts, Daniel},
   author={Lehn, Manfred},
   title={The geometry of moduli spaces of sheaves},
   series={Cambridge Mathematical Library},
   edition={2},
   publisher={Cambridge University Press, Cambridge},
   date={2010},
   pages={xviii+325},
   isbn={978-0-521-13420-0},
   review={\MR{2665168}},
   doi={10.1017/CBO9780511711985},
}

\bib{aL1993}{article}{
   author={Lorenzini, Anna},
   title={The minimal resolution conjecture},
   journal={J. Algebra},
   volume={156},
   date={1993},
   number={1},
   pages={5--35},
   issn={0021-8693},
   review={\MR{1213782 (94g:13005)}},
   doi={10.1006/jabr.1993.1060},
}

\bib{OSS2011}{book}{
   author={Okonek, Christian},
   author={Schneider, Michael},
   author={Spindler, Heinz},
   title={Vector bundles on complex projective spaces},
   series={Modern Birkh\"auser Classics},
   note={Corrected reprint of the 1988 edition;
   With an appendix by S. I. Gelfand},
   publisher={Birkh\"auser/Springer Basel AG, Basel},
   date={2011},
   pages={viii+239},
   isbn={978-3-0348-0150-8},
   review={\MR{2815674}},
}

\end{biblist}
\end{bibdiv}

\end{document}